\newtheorem{thm}{Theorem}[section]
\newtheorem{prop}[thm]{Proposition}
\newtheorem{prop-def}{thm}[section]
\theoremstyle{definition}
\newtheorem{defn}[thm]{Definition}
\newtheorem{exam}[thm]{Example}
\newcommand{\nc}{\newcommand}
 \nc{\mbibitem}[1]{\bibitem{#1}} % Use this to show number
 \nc{\mrm}[1]{{\rm #1}}
\nc{\pl}{\cdot}
 \nc{\la}{\longrightarrow}
\nc{\ot}{\otimes}
 \nc{\rar}{\rightarrow}
\nc{\PLA}{{\mathrm{PLA}}}
\nc{\bfk}{{\bf k}}
\nc{\C}{{\mathrm{C}}}
\nc{\RBA}{\mathsf{RBPLA_{\lambda}}}
\nc{\RBO}{\mathsf{RBO}}
\nc{\End}{\mrm{End}}
\nc{\Ext}{\mrm{Ext}}
\nc{\Fil}{\mrm{Fil}}
\nc{\Fr}{\mrm{Fr}}
\nc{\Frob}{\mrm{Frob}}
\nc{\Gal}{\mrm{Gal}}
\nc{\GL}{\mrm{GL}}
\nc{\Hom}{\mrm{Hom}}
\nc{\Hoch}{\mrm{Hoch}}
\nc{\hsr}{\mrm{H}}
\nc{\hpol}{\mrm{HP}}
\nc{\im}{\mrm{im}}
\nc{\Id}{\mrm{Id}}
\nc{\Irr}{\mrm{Irr}}
\nc{\incl}{\mrm{incl}}
\nc{\length}{\mrm{length}}
\nc{\NLSW}{\mrm{NLSW}}
\nc{\Lie}{\mrm{Lie}}
\nc{\Alg}{\mrm{Alg}}
\nc{\mchar}{\rm char}
\nc{\mpart}{\mrm{part}}
\nc{\ql}{{\QQ_\ell}}
\nc{\qp}{{\QQ_p}}
\nc{\rank}{\mrm{rank}}
\nc{\rcot}{\mrm{cot}}
\nc{\rdef}{\mrm{def}}
\nc{\rdiv}{{\rm div}}
\nc{\rmH}{ {\mathrm{H}}}
\nc{\rtf}{{\rm tf}}
\nc{\rtor}{{\rm tor}}
\nc{\res}{\mrm{res}}
\nc{\Sh}{{\mathrm{Sh}}}
\nc{\SL}{\mrm{SL}}
\nc{\Spec}{\mrm{Spec}}
\nc{\sgn}{{\mathrm{sgn}}}
\nc{\tor}{\mrm{tor}}
\nc{\Tr}{\mrm{Tr}}
\nc{\tr}{\mrm{tr}}
\nc{\wt}{\mrm{wt}}
\nc{\op}{\mrm{op}}
\nc{\BA}{{\mathbb A}}   \nc{\CC}{{\mathbb C}}
\nc{\DD}{{\mathbb D}}   \nc{\EE}{{\mathbb E}}
\nc{\FF}{{\mathbb F}}   \nc{\GG}{{\mathbb G}}
\nc{\HH}{ \mathrm{H}}   \nc{\LL}{{\mathbb L}}
\nc{\NN}{{\mathbb N}}   \nc{\PP}{{\mathbb P}}
\nc{\QQ}{{\mathbb Q}}   \nc{\RR}{{\mathbb R}}
\nc{\TT}{{\mathbb T}}   \nc{\VV}{{\mathbb V}}
\nc{\ZZ}{{\mathbb Z}}   \nc{\TP}{\widetilde{P}}
 \nc{\MN}{{\mathrm{MN}}}
\nc{\cala}{{\mathcal A}}    \nc{\calc}{{\mathcal C}}
\nc{\cald}{\mathcal{D}}     \nc{\cale}{{\mathcal E}}
\nc{\calf}{{\mathcal F}}    \nc{\calg}{{\mathcal G}}
\nc{\calh}{{\mathcal H}}    \nc{\cali}{{\mathcal I}}
\nc{\call}{{\mathcal L}}    \nc{\calm}{{\mathcal M}}
\nc{\caln}{{\mathcal N}}    \nc{\calo}{{\mathcal O}}
\nc{\calp}{{\mathcal P}}    \nc{\calr}{{\mathcal R}}
\nc{\cals}{{\mathcal S}}    \nc{\calt}{{\Omega}}
\nc{\calv}{{\mathcal V}}    \nc{\calw}{{\mathcal W}}
\nc{\calx}{{\mathcal X}}
\nc{\fraka}{{\mathfrak a}}
\nc{\frakb}{\mathfrak{b}}
\nc{\frakg}{{\frak g}}
\nc{\frakl}{{\frak l}}
\nc{\fraks}{{\frak s}}
\nc{\frakm}{{\frak m}}
\nc{\frakM}{{\frak M}}
\nc{\frakp}{{\frak p}}
\nc{\frakW}{{\frak W}}
\nc{\frakX}{{\frak X}}
\nc{\frakS}{{\frak S}}
\nc{\frakA}{{\frak A}}
\nc{\frakC}{{\frak{C}}}
\nc{\frakx}{{\frakx}}
\begin{document}

\title[Rota-Baxter pre-Lie algebras]{Cohomology theory of  Rota-Baxter pre-Lie algebras of arbitrary weights}

\author{Shuangjian Guo, Yufei Qin, Kai Wang and Guodong Zhou}
\address{Shuangjian Guo, School of Mathematics and Statistics, Guizhou University of Finance and Economics, Guiyang 550025, China}
\email{shuangjianguo@126.com }

\address{Yufei Qin, Kai Wang and Guodong Zhou,   School of Mathematical Sciences, Shanghai Key laboratory of PMMP,
  East China Normal University,
 Shanghai 200241,
   China}
   \email{290673049@qq.com}
   \email{wangkai@math.ecnu.edu.cn }
\email{gdzhou@math.ecnu.edu.cn}

\date{\today}

\begin{abstract}
This paper is devoted to study deformation, cohomology theory of  Rota-Baxter pre-Lie algebras
of arbitrary weights. First we give the notion of a  new representation of a  Rota-Baxter pre-Lie
algebra of arbitrary weight and define the cohomology theory of a Rota-Baxter pre-Lie
algebra of arbitrary weight.  Then  we study formal deformations by  lower degree cohomology groups.  Finally,   we classify abelian extensions of Rota-Baxter pre-Lie
algebras of arbitrary weight using the
second cohomology group, and classify skeletal Rota-Baxter pre-Lie 2-algebra of arbitrary weight using the third
cohomology group as applications.
\end{abstract}

\subjclass[2010]{
16E40   %(co)homology of rings and algebras
16S80   %deformations of rings
12H05   %Rota-Baxter pre-Lie algebra
12H10   %difference algebra
16W25   %derivations
16S70   %extensions of rings by ideas
}

\keywords{cohomology,  deformation, pre-Lie algebra,  Rota-Baxter operator, Rota-Baxter pre-Lie 2-algebra}

\maketitle

\tableofcontents

\allowdisplaybreaks

\section*{Introduction} Pre-Lie algebras   (bearing also the names:  left-symmetric algebra \cite{Vinberg}, quasi-associative algebra \cite{Kup},  Vinberg algebra or Koszul algebra or Koszul-Vinberg algebra after   \cite{Kos61, Vinberg}) were first introduced by Cayley \cite{Ca90} in the context of rooted tree algebras. Nowadays, Pre-Lie algebras
have numerous applications and connections to many mathematical branches,
such as affine manifolds, affine structures on
Lie groups and convex homogeneous cones \cite{K86},  complex and symplectic structures on
Lie groups and Lie algebras \cite{AS05}, integrable systems \cite{B90}, classical and quantum
Yang-Baxter equations \cite{ES99,E99}, Poisson brackets and infinite-dimensional Lie
algebras \cite{BN85}, quantum field theory \cite{CK98}, operads \cite{CL01} and geometry and physics \cite{B06}.  See \cite{BaiSurvey} and \cite{Manchon} for more details.

Rota-Baxter algebras were first introduced by Baxter in his study of the fluctuation theory in probability \cite{B60}. Baxter's work was further investigated  by Rota \cite{Ro69} and Cartier \cite{C72}.  Later,   Guo et al. \cite{G00,Gu00, GK00} have done many excellent results as regards concerning Rota-Baxter algebras. In order to develop the deformation theory \cite{Ge63, Ge64} of Rota-Baxter operators.  Tang, Bai, Guo and Sheng \cite{TBGS19} studied  deformation theory
and cohomology theory of $\mathcal{O}$-operators (also called relative Rota-Baxter operators) on Lie algebras,
with applications to Rota-Baxter Lie algebras in mind. Das \cite{D20} developed a similar theory
for Rota-Baxter associative algebras of weight zero.    Recently,  Wang and Zhou \cite{WZ21}, Das \cite{D21} studied Rota-Baxter algebras of arbitrary  weight by different methods  respectively. Later, Das \cite{Das2}  studied cohomology  of weighted Rota-Baxter Lie algebras and Rota-Baxter
paired operators using the method of \cite{WZ21}.    This may be a  transformation from weight zero to arbitrary weight, and the successful  transformation is entirely due to Rota-Baxter Lie groups \cite{GL21}.  Later, Lang and Sheng \cite{LS21}  introduced the notion of a quadratic Rota-Baxter Lie algebra of arbitrary weight , and showed that factorizable Lie bialgebras one-to-one correspond
to quadratic Rota-Baxter Lie algebras of arbitrary weight, Furthermore,  they introduced the notions of matched pairs, bialgebras and Manin triples of Rota-Baxter Lie algebras of arbitrary weight, and showed that Rota-Baxter Lie bialgebras, Manin triples of Rota-Baxter Lie algebras and certain matched pairs of Rota-Baxter Lie algebras are equivalent.

  The notion of a Rota-Baxter operator on a pre-Lie algebra was introduced by Li,  Hou and Bai~\cite{LHB07} for the study of  bialgebra theory of pre-Lie algebras \cite{B08} and $L$-dendriform algebras \cite{BLN10}. Recently,   Liu \cite{L20} studied (quasi-)twilled pre-Lie algebras and the associated $L_{\infty}$-algebras and differential
graded Lie algebras.  Later,    Liu and Wang \cite{LW20} studied pre-Lie analogues of Poisson-Nijenhuis structures and introduced
ON-structures on bimodules over pre-Lie algebras.    There are some other related work \cite{TBGS19, WSBL}. These work all concern Rota-Baxter operators of weight zero.

Up to now, there was very few study about the Rota-Baxter operators of weight zero on pre-Lie algebras.  Not to mention studying  about the Rota-Baxter operators of arbitrary weight on pre-Lie algebras.   Thus it is time to study  deformation, cohomology theory of  Rota-Baxter pre-Lie algebras of arbitrary weights.

In a forthcoming paper, we will study Rota-Baxter pre-Lie algebras from an operadic viewpoint, say, we will exhibit the minimal model of the operad governing Rota-Baxter pre-Lie algebras and deduces from the minimal model the $L_\infty$-structure on the cochain complex of a Rota-Baxter pre-Lie algebra and the notion of homotopy Rota-Baxter pre-Lie algebras. We will also compute the cohomology of some examples.
This paper can be considered as the elementary part of our projet on Rota-Baxter pre-Lie algebras.

The paper is organized as follows. In Section 1, we recall some preliminaries. In Section 2,
we consider  Rota-Baxter pre-Lie algebras of arbitrary weight and introduce their representations. We also provide various examples and new constructions.  In Section 3,  we  define a cohomology theory for Rota-Baxter pre-Lie algebras of arbitrary weight. In Section 4,  we justify this cohomology theory by interpreting lower degree cohomology groups as formal deformations. In Section 5, we study  abelian extensions of
Rota-Baxter pre-Lie algebras of arbitrary weight. In Section 6, we classify skeletal Rota-Baxter pre-Lie 2-algebra of arbitrary weight using the third
cohomology group as applications. In the last section, inspired by \cite{AB08}, we construct a chain map from the cochain complex of a Rota-Baxter associative algebra \cite{WZ21} to that of the corresponding Rota-Baxter pre-Lie algebra.

Throughout this paper, let $\bfk$ be a field of characteristic $0$.  Except specially stated,  vector spaces are  $\bfk$-vector spaces and  all    tensor products are taken over $\bfk$.

\bigskip

\section{Pre-Lie algebras, bimodules and their cohomology theory}
We start with the background of pre-Lie algebras and their cohomology that we refer the reader to the surveys~\cite{B06, Manchon, BaiSurvey} for more details.

%\gd{By \cite{B06}, a pre-Lie algebra is exactly a right symmetric algebra. However, in this paper we are concerning left symmetric algebras (LSAs). Shall we change to RSAs or pre-Lie algebras or keep our definition and call pre-Lie  "LSA"?
%Another problem of notations: Do we write $\mathfrak{g}$ or just $A$ for a pre-Lie algebra as in \cite{B06, Manchon, BaiSurvey}? }

\begin{defn}
A {\bf pre-Lie algebra} \cite{Ge63}  is a vector space $A $  together with a binary  operation
$\cdot: A \otimes A  \rightarrow A $ such that, for
 $x, y,z \in A $, the associator $$(x, y, z) = (x\cdot y)\cdot z-x\cdot(y\cdot z)$$
is symmetric in $x, y,$ i.e.
\begin{align*}
(x, y, z) = (y, x, z), \text{ or \, equivalently}, (x \cdot y) \cdot z - x \cdot (y \cdot z) = (y \cdot x) \cdot z - y \cdot (x \cdot z).
\end{align*}
\end{defn}

Let $(A , \cdot)$ be a pre-Lie algebra. Define a new operation on $A $ by setting
\begin{align*}
[x, y]_{A } = x \cdot y - y \cdot x, \text{ for } x, y \in A .
\end{align*}
Then $(A, [,]_{A})$ is a Lie algebra, which is called the {\bf sub-adjacent Lie algebra} of $(A, \cdot)$ and we denote it by $\mathfrak{g}_A$.
   Furthermore, the map $L: A \rightarrow gl(A), \ x \mapsto L_x$, where $L_xy = x \cdot y$  for all $x, y \in A$, gives a  representation  of the Lie algebra $\mathfrak{g}_A$ of $A$.

   %\gd{See \cite{B06} for more details. delete?}

\begin{defn} \cite{Nij69, Nij70}\cite[Definition 3.1]{B08}
Let $(A, \cdot)$ be a pre-Lie algebra and $M$ a vector space. Let $S: A\otimes M\rightarrow M$ and $P: M \otimes A\rightarrow M$ be
two linear maps.% with $x \otimes u \mapsto x\cdot u$ and $u \otimes x \mapsto u\cdot x$, respectively.
 Then the triple $(M,  S, P)$ is called a {\bf representation}
over $A$ if
\begin{align*}
S(x\ot S(y\ot u)-S((x\cdot y)\ot u)=& S(y\ot S(x\ot u)-S((y\cdot x)\ot u)\\
S(x\ot P(u\ot y))-P(S(x\ot u)\ot y)=&P(u\ot  (x\cdot y))-P(P(u\ot x) \ot y),
%x\cdot (y\cdot u) - (x\cdot y) \cdot u =& y\cdot (x\cdot u) - (y\cdot x) \cdot u,\\
%x\cdot (u\cdot y) - (x\cdot u) \cdot y =& u \cdot (x\cdot y)- (u\cdot x) \cdot y ,
\end{align*}
for all  $x, y \in A, u \in M$.

 By abuse of notations, we shall write $x\cdot u=S(x\ot u)$ and $u\cdot y=P(u\ot y)$.
\end{defn}

 Let us recall the cohomology theory of pre-Lie algebras following \cite{Kos68, Nij69, Nij70, Dzh}; for homology of pre-Lie algebras, we refer the reader to \cite{Nij69,Nij70,  CL01}. Let $(A, \cdot)$ be a pre-Lie algebra and  $(M, S, P)$ be a representation of it. The  {\bf cohomology} of $A$ with coefficients in $M$ is the
cohomology of the cochain complex $$\{ \C^\bullet_\PLA(A, M), \partial_\PLA^\bullet \},$$ where $$\C^n (A, M) =  \Hom   ( \wedge^{n-1} A\ot A, M ), n \geq 1$$ and the
coboundary operator $$\partial_\PLA^n: \C^n(A, M ) \rightarrow \C^{n+1}(A, M), n\geq 1$$ is given by
\begin{eqnarray*}
&&(\partial_\PLA^n f)(x_1, ... , x_{n+1})\\
&=&\sum^{n}_{i=1}(-1)^{i+1}x_i\cdot f(x_1, ... , \hat{x}_i, ... , x_{n+1})+ \sum^{n}_{i=1}(-1)^{i+1}f(x_1, ... ,\hat{x}_i, ... ,  x_{n}, x_i)\cdot x_{n+1}\\
&&-\sum^{n}_{i=1}(-1)^{i+1}f(x_1,..., \hat{x}_i, ... , x_n,  x_i\cdot x_{n+1})+\sum_{1\leq i< j\leq n} (-1)^{i+j}f([x_i, x_j]_A, x_1, ... , \hat{x}_i, ... , \hat{x}_{j}, ... , x_{n+1}),
\end{eqnarray*}
for $x_1, ..., x_{n+1}\in A$. The corresponding cohomology groups are denoted by $\HH^*_\PLA(A, M).$

 \bigskip

\section{Rota-Baxter operators on pre-Lie algebras}

In  this section, we introduce basic definitions and facts about  Rota-Baxter pre-Lie algebra.

\begin{defn}\label{Def: Rota-Baxter pre-Lie algebra} Let $\lambda\in \bfk$.
Given  a pre-Lie algebra  $(\frakg,  \mu=\pl)$, it is called  a Rota-Baxter pre-Lie algebra  of weight $\lambda$, if there is a linear
operator $T: \frakg \rightarrow \frakg $ subject to
\begin{eqnarray}\label{Eq: Rota-Baxter relation}
T(a)\pl  T(b)=T\big(a\pl  T(b)+T(a)\pl  b+\lambda\  a\pl
b\big)\end{eqnarray}	
for arbitrary $a,~b \in \frakg $, or in terms of maps
\begin{eqnarray}\label{Eq: Rota-Baxter relation in terms of maps}
\mu\circ (T \ot T)=T\circ  \mu\circ (\Id\ot T +T \ot \Id)+\lambda\ T\circ \mu.\end{eqnarray} In this case,  $T$ is called a Rota-Baxter operator of weight $\lambda$.

Denote by $\RBA$ the category of Rota-Baxter pre-Lie algebras with obvious morphisms.
\end{defn}

Inspired by \cite{Das2},  we have the following example.
\begin{exam}
 Let $(\frakg, \cdot,  T)$ be a  Rota-Baxter pre-Lie algebra  of weight $\lambda$.
\begin{itemize}
\item[(i)]   For arbitrary $\lambda'\in \bfk$, the triple $(\frakg, \cdot,  \lambda' T)$ is a Rota-Baxter pre-Lie algebra  of weight $\lambda\cdot\lambda'$.

\item[(ii)]    The triple  $(\frakg, \cdot,  -\lambda  \Id_\frakg-T)$ is a Rota-Baxter pre-Lie algebra  of weight $\lambda$.

\item[(iii)]   Given an automorphism $\psi\in$ Aut$(\frakg)$ of the pre-Lie
algebra $\frakg$, the triple $(\frakg, \cdot, \psi^{-1} \circ T \circ \psi)$ is a Rota-Baxter pre-Lie algebra  of weight $\lambda$.
\end{itemize}
\end{exam}

\begin{exam}
 Let  $(\mathfrak{g}, \cdot)$ be a 3-dimensional pre-Lie algebra and $\{ e_1, e_2, e_3\}$ be a basis,   whose nonzero products are given as follows:
\begin{eqnarray*}
e_3 \cdot e_3=e_2.
\end{eqnarray*}
Then  $T=\left(
\begin{array}{ccc}
 a_{11} & a_{12}  & a_{13}  \\
  a_{21} & a_{22} & a_{23} \\
   a_{31} & a_{32} & a_{33} \\
   \end{array}
   \right)$
is a Rota-Baxter operator of weight $\lambda$   if and only if
\begin{eqnarray*}
Te_i \cdot Te_j = T (T e_i \cdot e_j+ e_i \cdot Te_j +\lambda e_i \cdot e_j), ~ \text{ for } i, j=1, 2, 3.
\end{eqnarray*}
We have
$$T e_1 \cdot Te_1=(a_{11}e_1 + a_{21}e_2 + a_{31}e_3)\cdot( a_{11}e_1 + a_{21}e_2 + a_{31}e_3) = a^2_{31}e_2.$$
Moreover,
\begin{eqnarray*}
T(T e_1\cdot e_1 +e_1\cdot Te_1+\lambda e_1 \cdot e_1)=0.
\end{eqnarray*}
Then  it follows from $T e_1 \cdot Te_1=T (T e_1\cdot e_1 +e_1\cdot Te_1+\lambda e_1 \cdot e_1)$  that
\begin{eqnarray*}
a_{31}=0.
\end{eqnarray*}
By considering other choices of $e_i$ and $e_j$ except $e_3$ and $e_3$, we obtain
\begin{eqnarray*}
a_{32}=0.
\end{eqnarray*}
Finally, we consider $e_3$ and $e_3$, we obtain
$$T e_3 \cdot Te_3=(a_{13}e_1 + a_{23}e_2 + a_{33}e_3)\cdot( a_{13}e_1 + a_{23}e_2 + a_{33}e_3) = a^2_{33}e_2.$$
Moreover,
\begin{eqnarray*}
&&T(T e_3\cdot e_3 +e_3\cdot Te_1+\lambda e_3 \cdot e_3)\\
&=& (2a_{33}+\lambda) T(e_2)\\
&=& (2a_{33}+\lambda)a_{12} e_1+(2a_{33}+\lambda)a_{22} e_2.
\end{eqnarray*}
Then  it follows from $T e_3 \cdot Te_3=T (T e_3\cdot e_3 +e_3\cdot Te_3+\lambda e_3 \cdot e_3)$  that
\begin{eqnarray*}
(2a_{33}+\lambda)a_{12}=0, (2a_{33}+\lambda)a_{22} = a^2_{33}.
\end{eqnarray*}
Summarize the above discussion.

(1) If $a_{12}=a_{22}=a_{31}=a_{32}=a_{33}=0$,  then any
$T=\left(
\begin{array}{ccc}
a_{11} & 0 & a_{13}  \\
a_{21} & 0 & a_{23} \\
0 & 0 & 0 \\
\end{array}
\right)$  is a  Rota-Baxter operator of weight $\lambda$.

(2) If $a_{12}=a_{31}=a_{32}=0$ and $a_{22}\neq 0$, then any
$T=\left(
\begin{array}{ccc}
a_{11} & 0 & a_{13}  \\
a_{21} & a_{22} & a_{23} \\
0 & 0 & a_{33} \\
\end{array}
\right)$  is a  Rota-Baxter operator of weight $\lambda=\frac{a_{33}^2}{a_{22}}-2a_{33}$.
\end{exam}

%\begin{remark}\label{Remark: Koszul duality not applied to RB algebras}
 % Although the pre-Lie axiom  of $\mu$ is quadratic, the defining relation Equation~\eqref{Eq: Rota-Baxter relation in terms of maps} of   Rota-Baxter operator $T$ is not quadratic and  not even homogeneous when $\lambda\neq 0$,  so   the  Koszul duality theory for operads \cite{GK94}\cite{LV} could not be applied directly to develop a cohomology theory of Rota-Baxter pre-Lie algebras. %This is why we adopt the naive approach in the paper when developing  homotopy theory of Rota-Baxter pre-Lie algebras.

%\end{remark}

% \subsection{Rota-Baxter bimodules}

\begin{defn}\label{Def: Rota-Baxter bimodules} Let $(\frakg, \mu, T)$ be a Rota-Baxter pre-Lie algebra and $(M, \cdot)$ be a bimodule over pre-Lie
algebra $(\frakg, \mu)$. We say that $(M, \cdot, T_M)$ is a bimodule over Rota-Baxter pre-Lie algebra $(\frakg, \mu, T)$ or a Rota-Baxter
bimodule if $(M, \cdot)$ is endowed with a linear operator $T_M : M \rightarrow M$ such that the following equations
\begin{eqnarray}
&&T(a)\cdot T_M(m) = T_M(a\cdot T_M(m) + T(a)\cdot m + \lambda a\cdot m),\\
&&T_M(m)\cdot T(a) = T_M(m\cdot T(a) + T_M(m)\cdot a + \lambda m\cdot a)
\end{eqnarray}
hold for arbitrary  $a \in \frakg$ and $m \in M$.

\end{defn}
It is ready to see that  $(\frakg ,\mu, T)$ itself is a bimodule over the Rota-Baxter pre-Lie algebra $(\frakg ,\mu,T)$, called the regular Rota-Baxter bimodule.

% The following   result  is easy  whose  proof is left to the reader:
%\begin{prop}  \label{Prop: trivial extension of Rota-Baxter bimodule}
%Let $(\frakg ,\mu,T)$ be a Rota-Baxter pre-Lie algebra and $M$ be a bimodule
%over associative algebra $(\frakg ,\mu)$.  It is well known that  $\frakg \oplus M$
%	 becomes  an associative algebra whose  multiplication is
%	\begin{eqnarray}(a,m)(b,n)=(a\pl  b, an+mb).\end{eqnarray}  Write $\iota: \frakg \to \frakg \oplus M, a\mapsto (a, 0)$ and $\pi: \frakg \oplus M\to \frakg , (a, %m)\mapsto a$. Then $\frakg \oplus M$ is a Rota-Baxter pre-Lie algebra such that $\iota$ and $\pi$ are both morphisms of Rota-Baxter pre-Lie algebras if and only if $M$ is a Rota-Baxter bimodule over $\frakg $.

 %This new Rota-Baxter pre-Lie algebra will be denoted by $\frakg \ltimes  M$, called the semi-direct product (or trivial extension) of $\frakg $ by $M$.
%	\end{prop}
%I%n fact, we will see that the above result is a special case of Propositions~\ref{Prop: new RB bimodules from abelian extensions} and \ref{prop:2-cocycle}.

%\begin{remark} One can use   monoid objects in certain slice categories to justify Definition~\ref{Def: Rota-Baxter bimodules} following \cite{DMZ}. In fact, one can show an equivalence
 % between the category of monoids in the slice category $\RBA/\frakg $ and that of Rota-Baxter bimodules over   a   Rota-Baxter pre-Lie  algebra $\frakg $.
%\end{remark}

%\subsection{New Rota-Baxter pre-Lie algebras and bimodues from old ones}\

Recall first the  following interesting observation:
\begin{prop} \label{Prop: new RB pre-Lie algebra}
	Let $(\frakg, \pl,T)$ be a Rota-Baxter pre-Lie algebra. Define a new binary operation as:
\begin{eqnarray}\end{eqnarray}
for arbitrary $a,b\in \frakg $. Then
\begin{itemize}

\item[(i)]      $(\frakg ,\star )$ is a new  pre-Lie algebra;

    \item[(ii)] the triple  $(\frakg ,\star ,T)$ also forms a Rota-Baxter pre-Lie algebra    and denote it by $\frakg _\star $;

\item[(iii)] the map $T:(\frakg ,\star , T)\rightarrow (\frakg,\pl, T)$ is a  morphism of Rota-Baxter pre-Lie  algebras.
\item[(iv)] Define a new operation on $\frakg $ by setting
\begin{align*}
	[x, y]_{\star } = x \star y - y \star x, \text{ for } x, y \in \frakg .
\end{align*}
Then $(\frakg, [\cdot, \cdot]_{\star })$ is a Lie algebra.
    \end{itemize}
	\end{prop}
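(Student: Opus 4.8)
The plan is to rest everything on a single reformulation of the Rota-Baxter relation~\eqref{Eq: Rota-Baxter relation}: by the very definition of $\star$, for all $a,b\in\frakg$ one has
\begin{equation*}
T(a\star b)=T(a)\pl T(b).
\end{equation*}
I will refer to this as the key identity. It says precisely that $T\colon(\frakg,\star)\to(\frakg,\pl)$ intertwines the two products, and since the Rota-Baxter operator on source and target is $T$ itself, the compatibility $T\circ T=T\circ T$ is automatic; hence statement (iii) is nothing but the key identity together with this trivial commutation. Moreover, once (i) is in hand, statement (iv) is immediate from the general fact recalled in Section~1 that the commutator of any pre-Lie product is a Lie bracket (the sub-adjacent Lie algebra construction applied to $(\frakg,\star)$). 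So the genuine content lies in (i) and (ii).

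For (ii) I would verify the Rota-Baxter relation for $\star$,
\begin{equation*}
T(a)\star T(b)=T\bigl(a\star T(b)+T(a)\star b+\lambda\,a\star b\bigr),
\end{equation*}
by reducing both sides to a common expression. Expanding the left-hand side straight from the definition of $\star$ gives $T(a)\pl T(T(b))+T(T(a))\pl T(b)+\lambda\,T(a)\pl T(b)$. For the right-hand side I would split it into three summands by linearity of $T$ and apply the key identity to each, which again yields $T(a)\pl T(T(b))+T(T(a))\pl T(b)+\lambda\,T(a)\pl T(b)$. The two sides agree term by term, so (ii) follows at once.

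The main obstacle is (i), which demands an honest computation of the $\star$-associator. I would expand
\begin{equation*}
(x,y,z)_\star=(x\star y)\star z-x\star(y\star z),
\end{equation*}
using the definition of $\star$ and, crucially, the key identity to replace every nested occurrence of $T(x\star y)$ by $T(x)\pl T(y)$, so that no term of the form $T(-\star-)$ survives. Collecting the resulting summands according to their total power of $\lambda$ and grouping each triple into an ordinary $\pl$-associator $(a,b,c)=(a\pl b)\pl c-a\pl(b\pl c)$, I expect to obtain
\begin{align*}
(x,y,z)_\star&=(x,T(y),T(z))+(T(x),y,T(z))+(T(x),T(y),z)\\
&\quad+\lambda\bigl[(x,y,T(z))+(x,T(y),z)+(T(x),y,z)\bigr]+\lambda^2\,(x,y,z).
\end{align*}
Because $(a,b,c)=(b,a,c)$ for the pre-Lie product $\pl$, interchanging $x$ and $y$ permutes the three terms inside each bracketed group among themselves and fixes the $\lambda^2$ summand; hence $(x,y,z)_\star=(y,x,z)_\star$, which is exactly the pre-Lie axiom for $\star$. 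The only delicate point is the bookkeeping of the expansion, but this is purely mechanical once the key identity has been used to clear the nested terms.
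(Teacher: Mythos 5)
Your proof is correct. The paper does not actually supply a proof of this proposition (it is presented as a recalled observation, essentially from \cite{LHB07}), so there is nothing to compare against; your route via the key identity $T(a\star b)=T(a)\pl T(b)$ is the standard one, and I have checked that your claimed associator decomposition
$(x,y,z)_\star=(x,T(y),T(z))+(T(x),y,T(z))+(T(x),T(y),z)+\lambda\bigl[(x,y,T(z))+(x,T(y),z)+(T(x),y,z)\bigr]+\lambda^2(x,y,z)$
does come out of the expansion exactly as you predict, so the symmetry of $(\,\cdot\,,\cdot\,,\cdot\,)$ in its first two arguments indeed yields the pre-Lie axiom for $\star$.
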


One can also construct new Rota-Baxter bimodules from old ones.
\begin{prop}\label{Prop: new-bimodule}
Let $(\frakg, \pl,T)$ be a Rota-Baxter pre-Lie algebra  and $(M, \pl, T_M)$ be a Rota-Baxter bimodule over it. We define a left action $``\rhd"$ and a right action $``\lhd"$ of $\frakg $ on $M$ as follows: for arbitrary $a\in \frakg ,m\in M$,
\begin{eqnarray}
a\rhd m:&=& T(a)\pl m-T_M(a \pl m),\\
m\lhd a:&=& m\pl T(a)-T_M(m \pl a).
\end{eqnarray}
Then these actions make $M$ into a Rota-Baxter  bimodule   over $\frakg _\star $ and denote this new  bimodule  by $_\rhd M_\lhd$.
\end{prop}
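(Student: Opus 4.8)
The plan is to verify directly the four defining identities: the two that make $(M,\rhd,\lhd)$ a bimodule over the pre-Lie algebra $(\frakg,\star)$, and the two Rota-Baxter compatibility conditions that promote it to a Rota-Baxter bimodule over $(\frakg,\star,T)$. The only inputs needed are the Rota-Baxter relation for $T$ in the form $T(a\star b)=T(a)\pl T(b)$ (established in Proposition \ref{Prop: new RB pre-Lie algebra}), whence $T([a,b]_\star)=[T(a),T(b)]$; the two Rota-Baxter bimodule relations for $T_M$ from Definition \ref{Def: Rota-Baxter bimodules}; and the two bimodule axioms for $(M,\pl)$ over $(\frakg,\pl)$.

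First I would check the first (left) pre-Lie bimodule axiom $a\rhd(b\rhd m)-(a\star b)\rhd m = b\rhd(a\rhd m)-(b\star a)\rhd m$. Expanding $b\rhd m=T(b)\pl m-T_M(b\pl m)$ and antisymmetrizing in $a,b$, the leading term $T(a)\pl(T(b)\pl m)-T(b)\pl(T(a)\pl m)$ collapses to $[T(a),T(b)]\pl m$ by the first bimodule axiom, and this matches $T([a,b]_\star)\pl m$. The remaining terms carry a single outer $T_M$; here one applies the first Rota-Baxter bimodule relation to each expression of the shape $T(a)\pl T_M(b\pl m)$, which produces precisely the summand $T_M(a\pl T_M(b\pl m))$ needed to cancel the genuinely nested $T_M$-contributions. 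After this cancellation one is left only with terms $T_M(\cdots)$ having single actions inside, and three applications of the first bimodule axiom reassemble them into $-T_M([a,b]_\star\pl m)$, completing this case.

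Next I would treat the mixed axiom $a\rhd(m\lhd b)-(a\rhd m)\lhd b=m\lhd(a\star b)-(m\lhd a)\lhd b$ by the same mechanism, now invoking \emph{both} Rota-Baxter bimodule relations and the second (mixed) bimodule axiom for $(M,\pl)$. Finally I would dispatch the two Rota-Baxter conditions, e.g. $T(a)\rhd T_M(m)=T_M(a\rhd T_M(m)+T(a)\rhd m+\lambda\, a\rhd m)$; these are the quickest. Expanding the left side and using the Rota-Baxter bimodule relation on $T^2(a)\pl T_M(m)$ reduces it to $T_M(T^2(a)\pl m+\lambda\, T(a)\pl m)$, while expanding the three $\rhd$-terms on the right and applying $T_M$ gives the same value once one observes that applying $T_M$ to the Rota-Baxter bimodule relation exactly annihilates the surviving $T_M^2$-contributions.

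The main obstacle is bookkeeping rather than anything conceptual: the mixed axiom is the delicate one because it is not symmetric, so both sides must be expanded in full and the $\lambda$-weighted and nested-$T_M$ terms tracked carefully to witness their cancellation. It is worth stressing that no slicker functorial route appears to be available: forming the semidirect product $\frakg\ltimes M$, equipping it with the operator $T\oplus T_M$, and applying Proposition \ref{Prop: new RB pre-Lie algebra} does produce a bimodule structure of $\frakg_\star$ on $M$, but its induced left action is the \emph{naive} one $a\pl T_M(m)+T(a)\pl m+\lambda\, a\pl m$, which differs from $\rhd$. The action $\rhd$ is a genuinely different twist, so the direct verification above seems unavoidable.
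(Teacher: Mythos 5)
Your proposal is correct and follows essentially the same route as the paper: a direct expansion of the two pre-Lie bimodule axioms using the Rota-Baxter identity $T(a\star b)=T(a)\pl T(b)$ together with the Rota-Baxter bimodule relations to absorb the nested $T_M$-terms, exactly as in the paper's computation. You additionally spell out the two Rota-Baxter compatibility conditions (e.g.\ both sides reducing to $T_M(T^2(a)\pl m+\lambda\,T(a)\pl m)$), which the paper states are easy and omits; your verification of these is accurate.
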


\begin{proof}
	Firstly, we show that $(M, \rhd, \lhd )$ is a bimodule over $(\frakg, \star )$. In fact, we have
\begin{eqnarray*}
&& a\rhd(b\rhd m)-(a\star b)\rhd m\\
&=& a\rhd(T(b)\pl m-T_M(b \pl m))-T(a\star b)\pl m+T_M((a\star b) \pl m)\\
&=& T(a)\cdot (T(b)\cdot m - T_M(b\cdot m))- T_M(a\cdot (T(b)\cdot m)+ a\cdot T_M(b\cdot m))\\
&&-(T(a)\cdot T(b))\cdot m + T_M((a\cdot T(b))\cdot m + (T(a)\cdot b)\cdot m + \lambda (a\cdot b)\cdot m)\\
&=&T(a)\cdot (T(b)\cdot m) -T(a)\cdot T_M(b\cdot m)- T_M(a\cdot (T(b)\cdot m)- a\cdot T_M(b\cdot m))\\
&&-(T(a)\cdot T(b))\cdot m + T_M((a\cdot T(b))\cdot m + (T(a)\cdot b)\cdot m + \lambda (a\cdot b)\cdot m)\\
&=&T(a)\cdot (T(b)\cdot m) -T_M(a\cdot T_M(b\cdot m) + T(a)\cdot (b\cdot m) + \lambda a\cdot (b\cdot m))\\
&&- T_M(a\cdot (T(b)\cdot m)- a\cdot T_M(b\cdot m))-(T(a)\cdot T(b))\cdot m \\
&&+ T_M((a\cdot T(b))\cdot m + (T(a)\cdot b)\cdot m + \lambda (a\cdot b)\cdot m)\\
&=&T(a)\cdot (T(b)\cdot m) -T_M(a\cdot (T(b)\cdot m) + T(a)\cdot (b\cdot m) + \lambda a\cdot (b\cdot m))\\
&&-(T(a)\cdot T(b))\cdot m + T_M((a\cdot T(b))\cdot m + (T(a)\cdot b)\cdot m + \lambda (a\cdot b)\cdot m).
\end{eqnarray*}
On the other hand, we have
\begin{eqnarray*}
&& b\rhd(a\rhd m)-(b\star a)\rhd m\\
&=&T(b)\cdot (T(a)\cdot m) -T_M(b\cdot (T(a)\cdot m) + T(b)\cdot (a\cdot m) + \lambda b\cdot (a\cdot m))\\
&&-(T(b)\cdot T(a))\cdot m + T_M((b\cdot T(a))\cdot m + (T(b)\cdot a)\cdot m + \lambda (b\cdot a)\cdot m).
\end{eqnarray*}
So we have
\begin{eqnarray*}
&& a\rhd(b\rhd m)-(a\star b)\rhd m=b\rhd(a\rhd m)-(b\star a)\rhd m.
\end{eqnarray*}

Similarly, we have
\begin{eqnarray*}
&& a\rhd (m\lhd b) - (a\rhd m) \lhd b\\
&=& a\rhd (m\pl T(b)-T_M(m \pl b)) - (T(a)\pl m-T_M(a \pl m)) \lhd b\\
&=& T(a)\cdot (m\pl T(b)-T_M(m \pl b))-T_M(a\pl (m\pl T(b)-T_M(m \pl b)))\\
&& -(T(a)\pl m-T_M(a \pl m))\cdot T(b)+T_M((T(a)\pl m-T_M(a \pl m))\cdot b)\\
&=& T(a)\cdot (m\pl T(b))-T(a)\cdot T_M(m \pl b)-T_M(a\pl (m\pl T(b)-T_M(m \pl b)))\\
&& -(T(a)\pl m)\cdot T(b)+T_M(a \pl m)\cdot T(b)+T_M((T(a)\pl m-T_M(a \pl m))\cdot b)\\
&=& T(a)\cdot (m\pl T(b))-T_M(T(a)\cdot (m\cdot b) + \lambda a\cdot (m\cdot b))-T_M(a\pl (m\pl T(b))\\
&&-(T(a)\pl m)\cdot T(b) +T_M((a \cdot m)\cdot T(b) + \lambda (a \cdot m)\cdot b)+T_M((T(a)\pl m)\cdot b).
\end{eqnarray*}
On the other hand, we have
\begin{eqnarray*}
&& m \lhd (a\star b)- (m\lhd a) \lhd b \\
&=& m\pl T(a\star b)-T_M(m \pl (a\star b))-(m\pl T(a)-T_M(m \pl a))\lhd b \\
&=& m\pl (T(a)\cdot T(b))-T_M(m \pl (a\pl  T(b)+T(a)\pl  b+\lambda a\pl  b))\\
&& -(m\pl T(a)-T_M(m \pl a))\pl T(b)+T_M((m\pl T(a)-T_M(m \pl a))\pl b)\\
&=& m\pl (T(a)\cdot T(b))-T_M(m \pl (a\pl  T(b)+T(a)\pl  b+\lambda a\pl  b))\\
&& -(m\pl T(a))\pl T(b) +T_M(m \pl a)\pl T(b)+T_M((m\pl T(a)-T_M(m \pl a))\pl b)\\
&=& m\pl (T(a)\cdot T(b))-T_M(m \pl (a\pl  T(b)+T(a)\pl  b+\lambda a\pl  b))-(m\pl T(a))\pl T(b) \\
&& +T_M((m\pl a)\cdot T(b) + T_M(m\pl a)\cdot b + \lambda (m\pl a)\cdot b)+T_M((m\pl T(a)-T_M(m \pl a))\pl b)\\
&=& m\pl (T(a)\cdot T(b))-T_M(m \pl (a\pl  T(b)+T(a)\pl  b+\lambda a\pl  b))-(m\pl T(a))\pl T(b) \\
&& +T_M((m\pl a)\cdot T(b)  + \lambda (m\pl a)\cdot b)+T_M((m\pl T(a))\pl b).
\end{eqnarray*}
So, we have
\begin{eqnarray*}
&& a\rhd (m\lhd b) - (a\rhd m) \lhd b=m \lhd (a\star b)- (m\lhd a) \lhd b.
\end{eqnarray*}
Finally,  It is easy to check that $M$ is  a Rota-Baxter  bimodule   over $\frakg _\star $ and we omit it.
\end{proof}

\bigskip

\section{Cohomology theory of Rota-Baxter pre-Lie algebras} \label{Sect: Cohomology theory of Rota-Baxter pre-Lie algebras}
In this  section, we will establish a cohomology theory for   Rota-Baxter pre-Lie algebras of arbitrary weight.

\subsection{Cohomology of Rota-Baxter Operators}\
\label{Subsect: cohomology RB operator}

Firstly, let's study the cohomology of  Rota-Baxter
operators.

 Let $(\frakg , \mu, T)$ be a
Rota-Baxter pre-Lie algebra and $(M, \cdot, T_M)$ be a Rota-Baxter bimodule over it. Recall that
Proposition~\ref{Prop: new RB pre-Lie algebra}  and Proposition~\ref{Prop: new-bimodule}  give a new
 pre-Lie algebra   $\frakg _\star $ and
  a new   Rota-Baxter bimodule  $_\rhd M_\lhd$ over $\frakg _\star $.
 Consider the Hochschild cochain complex of $\frakg _\star $ with
 coefficients in $_\rhd M_\lhd$:
 $$\C^\bullet_{\PLA}(\frakg _\star , {_\rhd
 	M_\lhd})=\bigoplus\limits_{n=0}^\infty \C^n_{\PLA}(\frakg _\star , {_\rhd
 	M_\lhd}).$$
  More precisely,  for $n\geqslant 0$,  $ \C^n_{\PLA}(\frakg _\star , {_\rhd M_\lhd})=\Hom  (\wedge^{n-1}\frakg \ot \frakg, M)$ and its differential $$\partial^n:
 \C^n_{\mathrm{PLA}}(\frakg _\star ,\  _\rhd M_\lhd)\rightarrow  \C^{n+1}_{\mathrm{PLA}}(\frakg _\star , {_\rhd M_\lhd}) $$ is defined as:

 \begin{align*}&\partial^n(f)(a_{1, n+1}) \\
 	=&\sum^{n}_{i=1}(-1)^{i+1}x_i\rhd f(x_1, ... , \hat{x}_i, ... , x_{n+1})+ \sum^{n}_{i=1}(-1)^{i+1}f(x_1, ... ,\hat{x}_i, ... ,  x_{n}, x_i)\lhd x_{n+1}\\
 	&-\sum^{n}_{i=1}(-1)^{i+1}f(x_1,..., \hat{x}_i, ... , x_n,  x_i\star x_{n+1})+\sum_{1\leq i< j\leq n} (-1)^{i+j}f([x_i, x_j]_{\star}, x_1, ... , \hat{x}_i, ... , \hat{x}_{j}, ... , x_{n+1})\\
 	=&\sum^{n}_{i=1}(-1)^{i+1}T\left( x_i\right) \cdot f(x_1, ... , \hat{x}_i, ... , x_{n+1})-\sum^{n}_{i=1}(-1)^{i+1}T_{M}\left( x_i\cdot f(x_1, ... , \hat{x}_i, ... , x_{n+1})\right) \\
 	&\sum^{n}_{i=1}(-1)^{i+1}f(x_1, ... ,\hat{x}_i, ... ,  x_{n}, x_i)\cdot T\left( x_{n+1}\right) -\sum^{n}_{i=1}(-1)^{i+1}T_{M}\left( f(x_1, ... ,\hat{x}_i, ... ,  x_{n}, x_i)\cdot x_{n+1}\right) \\
 	&-\sum^{n}_{i=1}(-1)^{i+1}f(x_1,..., \hat{x}_i, ... , x_n,  x_i\cdot T\left(  x_{n+1})\right) -\sum^{n}_{i=1}(-1)^{i+1}f(x_1,..., \hat{x}_i, ... , x_n, T\left(  x_{i}\right) \cdot x_{n+1})\\
 	&-\sum^{n}_{i=1}(-1)^{i+1}f(x_1,..., \hat{x}_i, ... , x_n, \lambda  x_i\cdot x_{n+1})+\sum_{1\leq i< j\leq n} (-1)^{i+j}f([T(x_i), x_j]_{\frakg}, x_1, ... , \hat{x}_i, ... , \hat{x}_{j}, ... , x_{n+1})\\
 	&+\sum_{1\leq i< j\leq n} (-1)^{i+j}f([x_i,T( x_j)]_{\frakg}, x_1, ... , \hat{x}_i, ... , \hat{x}_{j}, ... , x_{n+1})+\sum_{1\leq i< j\leq n} (-1)^{i+j}f(\lambda [x_i, x_j]_{\frakg}, x_1, ... , \hat{x}_i, ... , \hat{x}_{j}, ... , x_{n+1})
\end{align*}
for arbitrary $f\in  \C^n_{\PLA}(A_\star ,\  _\rhd M_\lhd)$ and $a_1,\dots,a_{n+1}\in A$.

 \smallskip

 \begin{defn}
 	Let $\frakg =(\frakg ,\mu,T)$ be a Rota-Baxter pre-Lie algebra of weight $\lambda$ and $M=(M, \cdot, T_M)$ be a Rota-Baxter bimodule over it. Then the cochain complex $(\C^\bullet_\PLA(\frakg _\star, {_\rhd M_\lhd}),\partial)$ is called the cochain complex of Rota-Baxter operator $T$ with coefficients in $(M, T_M)$,  denoted by $C_{\RBO}^\bullet(\frakg , M)$. The cohomology of $C_{\RBO}^\bullet(\frakg ,M)$, denoted by $\mathrm{H}_{\RBO}^\bullet(\frakg ,M)$, are called the cohomology of Rota-Baxter operator $T$ with coefficients in $(M, T_M)$.
 	
 	 When $(M,\cdot, T_M)$ is the regular Rota-Baxter bimodule $ (\frakg , \cdot,   T)$, we denote $\C^\bullet_{\RBO}(\frakg ,\frakg )$ by $\C^\bullet_{\RBO}(\frakg )$ and call it the cochain complex of  Rota-Baxter operator $T$, and denote $\rmH^\bullet_{\RBO}(\frakg ,\frakg )$ by $\rmH^\bullet_{\RBO}(\frakg )$ and call it the cohomology of Rota-Baxter operator $T$.
 \end{defn}

\subsection{Cohomology of Rota-Baxter pre-Lie algebras}\
\label{Subsec:chomology RB}

In this subsection, we will combine the  cohomology of   pre-Lie algebras and the cohomology of Rota-Baxter operators to define a cohomology theory for Rota-Baxter pre-Lie algebras.

Let $M=(M,\cdot, T_M)$ be a  Rota-Baxter bimodule over a Rota-Baxter pre-Lie algebra   $\frakg =(\frakg ,\mu,T)$. Now, let's construct a chain map   $$\Phi^\bullet:\C^\bullet_{\PLA}(\frakg ,M) \rightarrow C_{\RBO}^\bullet(\frakg ,M),$$ i.e., the following commutative diagram:
\[\xymatrix{
		\C^1_{\PLA}(\frakg ,M)\ar[r]^-{\partial_\PLA^1}\ar[d]^-{\Phi^1}& \C^2_{\PLA}(\frakg,M)\ar@{.}[r]\ar[d]^-{\Phi^1}&\C^n_{\PLA}(\frakg,M)\ar[r]^-{\partial_\PLA^n}\ar[d]^-{\Phi^n}&\C^{n+1}_{\PLA}(\frakg,M)\ar[d]^{\Phi^{n+1}}\ar@{.}[r]&\\
		\C^1_{\RBO}(\frakg,M)\ar[r]^-{\partial^1}&\C^2_{\RBO}(\frakg,M)\ar@{.}[r]& \C^n_{\RBO}(\frakg,M)\ar[r]^-{\partial^n}&\C^{n+1}_{\RBO}(\frakg,M)\ar@{.}[r]&
.}\]

 For  $n\geqslant 1$ and $ f\in \C^n_{\PLA}(\frakg,M)$,  define $\Phi^n(f)\in \C^n_{\RBO}(\frakg,M)$ as:
\begin{align*}
  \Phi^n(f) =&f\circ (T^{\wedge(n-1)}\ot T)\\
 &-\sum_{k=1}^{n}\lambda^{n-k}\sum_{1\leqslant i_1<i_2<\cdots< i_{k-1}\leqslant n-1}  T_M\circ f\circ (\Id^{\wedge (i_1-1)} \wedge  T \wedge \Id^{\wedge (i_2-i_1-1)} \wedge T\wedge \cdots\wedge T\wedge \Id^{\wedge (n-1-i_{k-1})}\ot \Id  ) \\
 & -\sum_{k=2}^{n}\lambda^{n-k}\sum_{1\leqslant i_1< \cdots< i_{k-2}\leqslant n-1}  T_M\circ f\circ (\Id^{\wedge (i_1-1)} \wedge  T \wedge \Id^{\wedge (i_2-i_1-1)} \wedge T\wedge \cdots\wedge T\wedge \Id^{\wedge (n-1-i_{k-2})}\ot T  ).
\end{align*}

\smallskip

\begin{prop}\label{Prop: Chain map Phi}
	The map $\Phi^\bullet: \C^\bullet_\PLA(\frakg,M)\rightarrow \C^\bullet_{\RBO}(\frakg,M)$ is a chain map.
\end{prop}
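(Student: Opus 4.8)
The plan is to establish $\partial^n\circ\Phi^n=\Phi^{n+1}\circ\partial_\PLA^n$ for all $n\geqslant 1$ by evaluating both composites on an arbitrary $f\in\C^n_\PLA(\frakg,M)$ and arbitrary $x_1,\dots,x_{n+1}\in\frakg$, expanding with the explicit formulas for $\partial_\PLA^n$, $\partial^n$ and $\Phi^\bullet$, and matching the resulting terms. The only structural inputs are the Rota-Baxter relation for $T$, which by Proposition~\ref{Prop: new RB pre-Lie algebra}(iii) says exactly that $T$ is an algebra morphism $T(a\star b)=T(a)\pl T(b)$, together with the two Rota-Baxter bimodule identities of Definition~\ref{Def: Rota-Baxter bimodules}, namely $T(a)\pl T_M(m)=T_M(a\pl T_M(m)+T(a)\pl m+\lambda\,a\pl m)$ and its right-hand analogue; these are precisely what let one move a factor $T_M$ (or $T$) across a product.

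I would first isolate the leading summand of $\Phi$, namely $T^\sharp(f):=f\circ(T^{\wedge(n-1)}\ot T)$, the part carrying no $T_M$. Pulling the $\frakg$-bimodule $M$ back along the morphism $T\colon\frakg_\star\to\frakg$ yields a $\frakg_\star$-bimodule with actions $a\pl m\mapsto T(a)\pl m$ and $m\pl a\mapsto m\pl T(a)$, and $T^\sharp$ is then a chain map into $\C^\bullet_\PLA(\frakg_\star,-)$ with these coefficients: the multiplication, insertion and bracket terms match on the nose because $T(a)\pl T(b)=T(a\star b)$ and $[T(a),T(b)]_\frakg=T([a,b]_\star)$. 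This functoriality step is automatic and accounts for all terms carrying no outer $T_M$.

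It then remains to match the terms with exactly one outer $T_M$. Writing $\partial^n=\partial^n_{\mathrm{pb}}+D$, where $\partial^n_{\mathrm{pb}}$ is the differential for the pulled-back coefficients and $D$ collects the $T_M$-correction terms arising from $a\rhd m=T(a)\pl m-T_M(a\pl m)$ and $m\lhd a=m\pl T(a)-T_M(m\pl a)$, and writing $\Phi=T^\sharp+\Psi$ with $\Psi$ the $T_M$-part of $\Phi$, the chain-map identity reduces (using the previous step $\partial^n_{\mathrm{pb}}T^\sharp=T^\sharp\partial_\PLA^n$) to $\partial^n_{\mathrm{pb}}\Psi-\Psi\partial_\PLA^n+D\Phi=0$. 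I would verify this by grouping every fully expanded term according to (a) the number $j$ of inputs to which $T$ is applied and (b) the power of $\lambda$, and then comparing the two sides subset-sum by subset-sum; the bimodule identities are invoked exactly to rewrite the products $T(x_i)\pl T_M(\cdots)$ and $T_M(\cdots)\pl T(x_{n+1})$ created by $\partial^n_{\mathrm{pb}}\Psi$ into the shape produced on the other side.

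The main obstacle is the combinatorics in (a)--(b). The insertion term of $\partial_\PLA^n$ feeds $x_i\pl x_{n+1}$ into $f$, whereas that of $\partial^n$ feeds $x_i\star x_{n+1}=x_i\pl T(x_{n+1})+T(x_i)\pl x_{n+1}+\lambda\,x_i\pl x_{n+1}$, which already carries a $T$ and an explicit $\lambda$; applying $\Phi^{n+1}$ to it therefore redistributes the $T$'s over the remaining slots and shifts both the index $k$ and the $\lambda$-power inside the subset sums $\sum_{i_1<\cdots<i_{k-1}}$. The crux is an index-bookkeeping argument showing that, once $T(x_i)\pl T(x_{n+1})=T(x_i\star x_{n+1})$ and the two bimodule relations are applied, each subset sum on the $\partial^n\circ\Phi^n$ side recombines exactly, with matching signs and $\lambda$-powers, with a subset sum on the $\Phi^{n+1}\circ\partial_\PLA^n$ side. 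To fix the sign and weight conventions I would first check $n=1,2$ by hand, and then run the general computation treating the left-action, right-action, insertion and bracket contributions separately.
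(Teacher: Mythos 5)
Your strategy is sound and, at bottom, it is the same proof as the paper's: a direct verification of $\partial^n\circ\Phi^n=\Phi^{n+1}\circ\partial^n_{\PLA}$ by expanding both sides and matching terms, with the Rota--Baxter identity entering as $T(a\star b)=T(a)\pl T(b)$ and the two Rota--Baxter bimodule identities used to commute $T_M$ past the module actions. What you do differently is organizational, and it is a genuine improvement in readability: the paper's Appendix A performs the entire expansion monolithically, whereas you split $\Phi=T^\sharp+\Psi$ and $\partial^n=\partial^n_{\mathrm{pb}}+D$, observe that $T^\sharp$ is a chain map into $\C^\bullet_{\PLA}(\frakg_\star,T^*M)$ by pure functoriality (using Proposition~\ref{Prop: new RB pre-Lie algebra}(iii) and the easily checked fact that pulling back a bimodule along a pre-Lie morphism yields a bimodule), and thereby dispose of all $T_M$-free terms in one stroke; in the paper these appear as the first four summands on each side and are matched by inspection. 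The trade-off is that your reduction isolates, but does not eliminate, the hard part: the identity $\partial^n_{\mathrm{pb}}\Psi-\Psi\partial^n_{\PLA}+D\Phi=0$ is exactly the mass of subset sums over $1\leqslant i_1<\cdots<i_{k-1}\leqslant n$ with weights $\lambda^{n-k}$ that occupies the whole of the paper's appendix, and your proposal only describes the bookkeeping scheme (grouping by the number of $T$-decorated slots and the $\lambda$-power) rather than carrying it out. That scheme is the correct one and would succeed --- it is visibly the pattern by which the paper's own terms cancel --- but as written your argument is a well-structured plan for the computation rather than the computation itself, so to be complete it still needs the explicit index-matching that constitutes the paper's proof.
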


%This result follows from the $L_\infty$-structure over the cochain complex of Rota-Baxter pre-Lie algebras, so we omit it; see Proposition~\ref{Prop: cohomlogy complex as underlying complex of L infinity algebra}.

  The proof of this result is rather long, but it is a routine verification and, although harder, is similar to the proof of \cite[Proposition 3.3]{GLSZ}. So we omit it. The curious reader could find the complete proof in  \cite{GQWZ22b} which is a preliminary  version of this paper.

\smallskip

%Multiplying $\Phi^n$ by $(-1)^n$, we can make the above commutative diagram into a bicomplex, denote it by $\C^{\bullet,\bullet}_{\RBA}(\frakg,M)$.
\begin{defn}
 Let $M=(M,\cdot, T_M)$ be a  Rota-Baxter bimodule over a Rota-Baxter pre-Lie algebra $\frakg=(\frakg,\mu,T)$  of weight $\lambda$ .  We define the  cochain complex $(\C^\bullet_{\RBA}(\frakg,M), d^\bullet)$  of Rota-Baxter pre-Lie algebra $(\frakg,\mu,T)$ with coefficients in $(M,\cdot, T_M)$ to the negative shift of the mapping cone of $\Phi^\bullet$, that is,   let
\[\C^0_{\RBA}(\frakg,M)=\C^0_\PLA(\frakg,M)  \quad  \mathrm{and}\quad   \C^n_{\RBA}(\frakg,M)=\C^n_\PLA(\frakg,M)\oplus \C^{n-1}_{\RBO}(\frakg,M), \forall n\geqslant 1,\]
 and the differential $d^n: \C^n_{\RBA}(\frakg,M)\rightarrow \C^{n+1}_{\RBA}(\frakg,M)$ is given by \[d^n(f,g)= (\delta^n(f), -\partial^{n-1}(g)  -\Phi^n(f))\]
 for arbitrary $f\in \C^n_\PLA(\frakg,M)$ and $g\in \C^{n-1}_{\RBO}(\frakg,M)$.
The  cohomology of $(\C^\bullet_{\RBA}(\frakg,M), d^\bullet)$, denoted by $\rmH_{\RBA}^\bullet(\frakg,M)$,  is called the cohomology of the Rota-Baxter pre-Lie algebra $(\frakg,\mu,T)$ with coefficients in $(M,\cdot, T_M)$.
When $(M,\cdot, T_M)=(\frakg,\mu, T)$, we just denote $\C^\bullet_{\RBA}(\frakg,\frakg), \rmH^\bullet_{\RBA}(\frakg,\frakg)$   by $\C^\bullet_{\RBA}(\frakg),  \rmH_{\RBA}^\bullet(\frakg)$ respectively, and call  them the cochain complex, the cohomology of Rota-Baxter pre-Lie algebra $(\frakg,\mu,T)$ respectively.
\end{defn}
There is an obvious short exact sequence of complexes:
\begin{eqnarray}\label{Seq of complexes} 0\to s\C^\bullet_{\RBO}(\frakg,M)\to \C^\bullet_{\RBA}(\frakg,M)\to \C^\bullet_{\PLA}(\frakg,M)\to 0\end{eqnarray}
which induces a long exact sequence of cohomology groups
$$0\to \rmH^{0}_{\RBA}(\frakg, M)\to\mathrm{HH}^0(\frakg, M)\to\rmH^0_{\RBO}(\frakg, M) \to \rmH^{1}_{\RBA}(\frakg, M)\to\mathrm{HH}^1(\frakg, M)\to $$
$$\cdots\to \mathrm{HH}^p(\frakg, M)\to \rmH^p_{\RBO}(\frakg, M)\to \rmH^{p+1}_{\RBA}(\frakg, M)\to \mathrm{HH}^{p+1}(\frakg, M)\to \cdots$$

%It is ready to see that up to signs, the complex $(\C^\bullet_{\RA}(A,M), d^\bullet)$ is exactly  the shift of the mapping cone of $\Phi^\bullet$.

\bigskip

%It is ready to see that up to signs, the complex $(\C^\bullet_{\RB\frakg}(\frakg,M), d^\bullet)$ is exactly  the shift of the mapping cone of $\Phi^\bullet$.

\bigskip

\section{Formal deformations}

In this section, we will study   deformations of Rota-Baxter pre-Lie algebras and interpret  them  via    lower degree   cohomology groups  of Rota-Baxter pre-Lie algebras defined in last section.

\subsection{Formal deformations of Rota-Baxter pre-Lie algebras}\

Let $(\frakg,\mu, T)$ be a Rota-Baxter pre-Lie algebra of weight $\lambda$.   Consider a 1-parameterized family:
\[\mu_t=\sum_{i=0}^\infty \mu_it^i, \ \mu_i\in \C^2_\PLA(\frakg),\quad  T_t=\sum_{i=0}^\infty T_it^i,  \ T_i\in \C^1_{\RBO}(\frakg).\]

\begin{defn}
	A  1-parameter formal deformation of    Rota-Baxter pre-Lie algebra $(\frakg, \mu,T)$ is a pair $(\mu_t,T_t)$ which endows the flat $\bfk[[t]]$-module $\frakg[[t]]$ with a  Rota-Baxter pre-Lie algebra structure over $\bfk[[t]]$ such that $(\mu_0,T_0)=(\mu,T)$.
\end{defn}

 Power series $\mu_t$ and $ T_t$ determine a  1-parameter formal deformation of Rota-Baxter pre-Lie algebra $(\frakg,\mu,T)$ if and only if for arbitrary $a,b,c\in \frakg$, the following equations hold :
 \begin{eqnarray*}
 \mu_t(\mu_t(a\ot b)\ot c)-\mu_t(a\ot \mu_t(b\ot c))&=& \mu_t(\mu_t(b\ot a)\ot c)-\mu_t(b\ot \mu_t(a\ot c)),\\
 \mu_t(T_t(a)\ot T_t(b))&=& T_t\Big(\mu_t(a\ot T_t(b))+\mu_t(T_t(a)\ot b)+\lambda \mu_t(a\ot b)\Big).
 \end{eqnarray*}
By expanding these equations and comparing the coefficient of $t^n$, we obtain  that $\{\mu_i\}_{i\geqslant0}$ and $\{T_i\}_{i\geqslant0}$ have to  satisfy: for arbitrary $n\geqslant 0$,
\begin{equation}\label{Eq: deform eq for  products in RBA}
\sum_{i=0}^n\mu_i\circ(\mu_{n-i}(a\ot b)\ot c)-\sum_{i=0}^n\mu_i\circ(a\ot \mu_{n-i}(b\ot c))
=\sum_{i=0}^n\mu_i\circ(\mu_{n-i}(b\ot a)\ot c)-\sum_{i=0}^n\mu_i\circ(b\ot \mu_{n-i}(a\ot c)),\end{equation}
\begin{equation}\label{Eq: Deform RB operator in RBA} \begin{array}{rcl}
\sum_{i+j+k=n\atop i, j, k\geqslant 0}	\mu_{i}\circ(T_j\ot T_{k})&=&\sum_{i+j+k=n\atop i, j, k\geqslant 0} T_{i}\circ \mu_j\circ (\Id\ot T_{k})\\
&  &+\sum_{i+j+k=n\atop i, j, k\geqslant 0} T_{i}\circ\mu_j\circ (T_{k}\ot \Id)+\lambda\sum_{i+j=n\atop i, j \geqslant 0}T_i\circ\mu_{j}.
\end{array}\end{equation}
Obviously, when $n=0$, the above conditions are exactly the associativity of $\mu=\mu_0$ and Equation~(\ref{Eq: Rota-Baxter relation}) which is the defining relation of Rota-Baxter operator $T=T_0$.

\smallskip

\begin{prop}\label{Prop: Infinitesimal is 2-cocyle}
	Let $(\frakg[[t]],\mu_t,T_t)$ be a  1-parameter formal deformation of
	Rota-Baxter pre-Lie algebra $(\frakg,\mu,T)$ of weight $\lambda$. Then
	$(\mu_1,T_1)$ is a 2-cocycle in the cochain complex
	$C_{\RBA}^\bullet(\frakg)$.
\end{prop}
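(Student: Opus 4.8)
The plan is to unwind the mapping-cone differential in degree two and show that each of the two components of $d^2(\mu_1,T_1)$ vanishes, reading off the required identities from the coefficient of $t$ in the two families of deformation equations. Since $\C^2_{\RBA}(\frakg)=\C^2_\PLA(\frakg)\oplus\C^1_{\RBO}(\frakg)$ and, by definition of the mapping cone, $d^2(\mu_1,T_1)=\big(\partial_\PLA^2(\mu_1),\,-\partial^1(T_1)-\Phi^2(\mu_1)\big)$, proving that $(\mu_1,T_1)$ is a $2$-cocycle amounts to establishing the two identities $\partial_\PLA^2(\mu_1)=0$ and $\partial^1(T_1)+\Phi^2(\mu_1)=0$ separately.

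For the first identity I would extract the coefficient of $t$ from Equation~(\ref{Eq: deform eq for products in RBA}), that is, set $n=1$. Keeping only the summands in which exactly one factor carries the index $1$ while the remaining factors are $\mu_0=\mu$, one obtains a relation among $\mu_1$, $\mu$ and the composites $\mu_1(a\pl b,c)$, etc. Using $\mu_1(a\pl b,c)-\mu_1(b\pl a,c)=\mu_1([a,b]_A,c)$ and transposing all terms to one side, this relation coincides term by term with the pre-Lie cocycle condition obtained from the explicit formula for $\partial_\PLA^n$ at $n=2$; thus $\partial_\PLA^2(\mu_1)=0$. This is the classical statement that the infinitesimal of a deformation of a pre-Lie algebra is a $2$-cocycle, and it presents no real difficulty.

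The substance of the proof lies in the second identity. Here I would set $n=1$ in Equation~(\ref{Eq: Deform RB operator in RBA}); the constraint $i+j+k=1$ (resp.\ $i+j=1$) leaves only the index-patterns $(1,0,0),(0,1,0),(0,0,1)$ (resp.\ $(1,0),(0,1)$), producing an identity among the products built from $\mu$, $\mu_1$, $T$ and $T_1$ evaluated on a pair $(a,b)$. In parallel I would expand $\partial^1(T_1)$ from the explicit differential of $\C^\bullet_{\RBO}$ at $n=1$, where the wedge-sum over $1\le i<j\le 1$ is empty, $T_M=T$ and $\cdot=\mu$, and expand $\Phi^2(\mu_1)$ from the definition of $\Phi^n$ at $n=2$, which contributes exactly the four terms $\mu_1(T(a),T(b))-\lambda T(\mu_1(a,b))-T(\mu_1(T(a),b))-T(\mu_1(a,T(b)))$. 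Matching these two expansions against the rearranged $n=1$ equation then yields $\partial^1(T_1)+\Phi^2(\mu_1)=0$, and hence $-\partial^1(T_1)-\Phi^2(\mu_1)=0$.

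The main obstacle is purely the bookkeeping in this last step: one must track the signs $(-1)^{i+1}$ appearing in $\partial^1$, the placement of $T$ versus $\Id$ in the several summands of $\Phi^2$, and verify that collecting every term of the $n=1$ Rota-Baxter deformation equation on one side reproduces precisely $-\big(\partial^1(T_1)+\Phi^2(\mu_1)\big)$. Once the explicit expansions of $\partial^1(T_1)$ and $\Phi^2(\mu_1)$ are laid side by side, this reduces to a direct term-by-term comparison of the eleven or so summands, including the $\lambda$-weighted ones, so no conceptual difficulty remains beyond organizing the computation.
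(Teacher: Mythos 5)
Your proposal is correct and follows essentially the same route as the paper: extract the coefficient of $t$ (the $n=1$ case) from Equations~(\ref{Eq: deform eq for  products in RBA}) and (\ref{Eq: Deform RB operator in RBA}) and identify the resulting two identities with $\partial_\PLA^2(\mu_1)=0$ and $\Phi^2(\mu_1)=-\partial^1(T_1)$, i.e.\ with the vanishing of the two components of $d^2(\mu_1,T_1)$. Your explicit expansion of $\Phi^2(\mu_1)$ as $\mu_1\circ(T\ot T)-T\circ\mu_1\circ(T\ot\Id)-T\circ\mu_1\circ(\Id\ot T)-\lambda\,T\circ\mu_1$ agrees with the paper's computation, so nothing further is needed.
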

\begin{proof} When $n=1$,   Equations~(\ref{Eq: deform eq for  products in RBA}) and (\ref{Eq: Deform RB operator in RBA})  become
	 \begin{align*}
&\mu_1(a\ot b)\cdot c+ \mu_1((a\cdot b)\otimes c)-a\cdot \mu_1(b\ot c) -\mu_1(a\otimes b\cdot c) \\
&=\mu_1(b\ot a)\cdot c+ \mu_1((b\cdot a)\otimes c)-b\cdot \mu_1(a\ot c) -\mu_1(b\otimes a\cdot c),
 \end{align*}
and
$$\begin{array}{cl}
	&\mu_1 (T\ot T)-\{T\circ\mu_1\circ(\Id\ot T)+T\circ\mu_1\circ(T\ot \Id)+\lambda T\circ \mu_1\}\\
	=&-\{\mu\circ(T\ot T_1)+\mu\circ(T_1\ot T)-T\circ\mu\circ(\Id\ot T_1)-T\circ\mu\circ(T_1\ot\Id)\}\\
	&+\{T_1\circ\mu\circ(\Id\ot T)+T_1\circ\mu\circ(T\ot \Id)+\lambda T_1\circ \mu\}.
	\end{array}$$
Note that  the first equation is exactly $\delta^2(\mu_1)=0\in \C^\bullet_{\PLA}(\frakg)$ and that  second equation is exactly to  \[\Phi^2(\mu_1)=-\partial^1(T_1) \in \C^\bullet_{\RBO}(\frakg).\]
	So $(\mu_1,T_1)$ is a 2-cocycle in $\C^\bullet_{\RBA}(\frakg)$.
	\end{proof}

\smallskip

\begin{defn} The 2-cocycle $(\mu_1,T_1)$ is called the infinitesimal of the 1-parameter formal deformation $(\frakg[[t]],\mu_t,T_t)$ of Rota-Baxter pre-Lie algebra $(\frakg,\mu,T)$.
	\end{defn}

In general, we can rewrite Equation~(\ref{Eq: deform eq for  products in RBA} ) and (\ref{Eq: Deform RB operator in RBA}) as
\begin{eqnarray} \label{Eq: general formal of deform product in RBA} \delta^2(\mu_n)(a\ot b\ot c) = \sum_{i=1}^{n-1} (\mu_i\circ \mu_{n-i})(a\ot b\ot c)- \sum_{i=1}^{n-1} (\mu_i\circ \mu_{n-i})(b\ot a\ot c)\end{eqnarray}
\begin{equation} \label{Eq: general formal of deform RBO in RBA}
\begin{array}{rcl}\partial^{1}(T_n)   +\Phi^2(\mu_n)&=& \sum_{i+j+k=n\atop 0 \leqslant i, j, k\leqslant n-1}	\mu_{i}\circ(T_j\ot T_{k})-\sum_{i+j+k=n\atop 0 \leqslant i, j, k\leqslant n-1} T_{i}\circ \mu_j\circ (\Id\ot T_{k})\\ &&-\sum_{i+j+k=n\atop 0 \leqslant i, j, k\leqslant n-1} T_{i}\circ\mu_j\circ (T_{k}\ot \Id)-\sum_{i+j=n\atop 0 \leqslant i, j\leqslant n-1}T_i\circ\mu_{j}.
\end{array}\end{equation}

\smallskip
\begin{defn}
Let $(\frakg[[t]],\mu_t,T_t)$ and $(\frakg[[t]],\mu_t',T_t')$ be two 1-parameter formal deformations of Rota-Baxter pre-Lie algebra $(\frakg,\mu,T)$. A formal isomorphism from $(\frakg[[t]],\mu_t',T_t')$ to $(\frakg[[t]], \mu_t, T_t)$ is a power series $\psi_t=\sum_{i=0}\psi_it^i: \frakg[[t]]\rightarrow \frakg[[t]]$, where $\psi_i: \frakg\rightarrow \frakg$ are linear maps with $\psi_0=\Id_\frakg$, such that:
\begin{eqnarray}\label{Eq: equivalent deformations}\psi_t\circ \mu_t' &=& \mu_t\circ (\psi_t\ot \psi_t),\\
\psi_t\circ T_t'&=&T_t\circ\psi_t. \label{Eq: equivalent deformations2}
	\end{eqnarray}
	In this case, we say that the two 1-parameter formal deformations $(\frakg[[t]], \mu_t,T_t)$ and
	$(\frakg[[t]],\mu_t',T_t')$ are  equivalent.
\end{defn}

\smallskip

Given a Rota-Baxter pre-Lie algebra $(\frakg,\mu,T)$, the power series $\mu_t,~~T_t$
with $\mu_i=\delta_{i,0}\mu,~~ T_i=\delta_{i,0}T$ make
$(\frakg[[t]],\mu_t,T_t)$ into a $1$-parameter formal deformation of
$(\frakg,\mu,T)$. Formal deformations equivalent to this one are called trivial.
\smallskip

\begin{thm}
The infinitesimals of two equivalent 1-parameter formal deformations of $(\frakg,\mu,T)$ are in the same cohomology class in $\rmH^\bullet_{\RBA}(\frakg)$.
\end{thm}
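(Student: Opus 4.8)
The plan is to exhibit an explicit $1$-cochain in $\C^1_{\RBA}(\frakg)=\C^1_\PLA(\frakg)\oplus \C^0_{\RBO}(\frakg)$ whose $d^1$-coboundary equals the difference $(\mu_1,T_1)-(\mu_1',T_1')$ of the two infinitesimals. Let $\psi_t=\sum_{i\geqslant 0}\psi_i t^i$, with $\psi_0=\Id_\frakg$, be a formal isomorphism from $(\frakg[[t]],\mu_t',T_t')$ to $(\frakg[[t]],\mu_t,T_t)$. The natural candidate is the pair $(-\psi_1,0)$, where $\psi_1\in\Hom(\frakg,\frakg)=\C^1_\PLA(\frakg)$ is the linear coefficient of $\psi_t$; I expect the $\C^0_{\RBO}$-component to play no role.

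First I would expand the two defining relations (\ref{Eq: equivalent deformations}) and (\ref{Eq: equivalent deformations2}) and compare the coefficients of $t^1$. Using $\psi_0=\Id$, $\mu_0=\mu_0'=\mu$ and $T_0=T_0'=T$, the first relation gives
\[\mu_1-\mu_1'=\psi_1\circ\mu-\mu\circ(\psi_1\ot\Id)-\mu\circ(\Id\ot\psi_1),\]
and the second gives
\[T_1-T_1'=\psi_1\circ T-T\circ\psi_1.\]

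Next I would identify these right-hand sides with the two components of $d^1$. Specializing the pre-Lie coboundary to $n=1$ yields $\delta^1(\psi_1)=\mu\circ(\Id\ot\psi_1)+\mu\circ(\psi_1\ot\Id)-\psi_1\circ\mu$, so the first display reads $\mu_1-\mu_1'=-\delta^1(\psi_1)$. Specializing the chain map $\Phi^\bullet$ to $n=1$, where both correction sums degenerate, leaving only $f\circ T-T_M\circ f$, and using $T_M=T$ for the regular bimodule yields $\Phi^1(\psi_1)=\psi_1\circ T-T\circ\psi_1$, so the second display reads $T_1-T_1'=\Phi^1(\psi_1)$. Since $d^1(f,g)=(\delta^1(f),-\partial^0(g)-\Phi^1(f))$, taking $(f,g)=(-\psi_1,0)$ gives
\[d^1(-\psi_1,0)=\bigl(-\delta^1(\psi_1),\,\Phi^1(\psi_1)\bigr)=(\mu_1-\mu_1',\,T_1-T_1'),\]
so the two infinitesimals are cohomologous in $\rmH^2_{\RBA}(\frakg)$, as claimed.

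The only delicate point is sign and degenerate-index bookkeeping: correctly evaluating $\Phi^1$ from the general formula for $\Phi^n$ (the empty products of $T$'s and the vanishing of the $k\geqslant 2$ sum when $n=1$) and fixing the orientation of the coboundary relative to the direction of $\psi_t$. The conceptual content lies entirely in reading off the linear-in-$t$ terms of the two intertwining relations, and I foresee no deeper obstacle.
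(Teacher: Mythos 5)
Your proposal is correct and follows essentially the same route as the paper's own proof: expand the intertwining relations $\psi_t\circ\mu_t'=\mu_t\circ(\psi_t\ot\psi_t)$ and $\psi_t\circ T_t'=T_t\circ\psi_t$, read off the coefficient of $t$, and recognize the difference of infinitesimals as $d^1$ of the $1$-cochain $(\pm\psi_1,0)$, using $\delta^1(\psi_1)=\mu\circ(\Id\ot\psi_1)+\mu\circ(\psi_1\ot\Id)-\psi_1\circ\mu$ and $\Phi^1(\psi_1)=\psi_1\circ T-T\circ\psi_1$. The only cosmetic difference is that the paper writes $(\mu_1',T_1')-(\mu_1,T_1)=d^1(\psi_1,0)$ while you use the opposite sign convention, which is equivalent.
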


\begin{proof} Let $\psi_t:(\frakg[[t]],\mu_t',T_t')\rightarrow (\frakg[[t]],\mu_t,T_t)$ be a formal isomorphism.
	Expanding the identities and collecting coefficients of $t$, we get from Equations~(\ref{Eq: equivalent deformations}) and (\ref{Eq: equivalent deformations2}):
	\begin{eqnarray*}
		\mu_1'&=&\mu_1+\mu\circ(\Id\ot \psi_1)-\psi_1\circ\mu+\mu\circ(\psi_1\ot \Id),\\
		T_1'&=&T_1+T\circ\psi_1-\psi_1\circ T,
		\end{eqnarray*}
	that is, we have\[(\mu_1',T_1')-(\mu_1,T_1)=(\delta^1(\psi_1), -\Phi^1(\psi_1))=d^1(\psi_1,0)\in  \C^\bullet_{\RBA}(\frakg).\]
\end{proof}

\smallskip

\begin{defn}
	A Rota-Baxter pre-Lie algebra $(\frakg,\mu,T)$ is said to be rigid if every 1-parameter formal deformation is trivial.
\end{defn}

\begin{thm}
	Let $(\frakg,\mu,T)$ be a Rota-Baxter pre-Lie algebra of weight $\lambda$. If $\rmH^2_{\RBA}(\frakg)=0$, then $(\frakg,\mu,T)$ is rigid.
\end{thm}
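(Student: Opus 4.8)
The plan is to trivialize an arbitrary formal deformation order by order, using $\rmH^2_{\RBA}(\frakg)=0$ to remove the lowest surviving term at each stage. Let $(\frakg[[t]],\mu_t,T_t)$ be a $1$-parameter formal deformation of $(\frakg,\mu,T)$; I aim to produce a formal isomorphism to the trivial deformation. The induction hypothesis at stage $n\geqslant 1$ is that, after replacing $(\mu_t,T_t)$ by an equivalent deformation, one has $\mu_i=0$ and $T_i=0$ for all $1\leqslant i\leqslant n-1$ (the case $n=1$ being vacuous, since $\mu_0=\mu$ and $T_0=T$ always hold). First I would extract a cocycle: under the induction hypothesis, every summand on the right-hand side of Equations~(\ref{Eq: general formal of deform product in RBA}) and~(\ref{Eq: general formal of deform RBO in RBA}) at order $n$ is a composite in which at least one factor carries an index in $[1,n-1]$. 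Indeed the indices are nonnegative and sum to $n$ while each is at most $n-1$, so not all can be $0$; for instance in $\mu_i\circ(T_j\ot T_k)$ with $i=0$ one has $j+k=n$ with $j,k\leqslant n-1$, forcing $j,k\geqslant 1$. Hence all right-hand sides vanish, leaving $\delta^2(\mu_n)=0$ and $\partial^1(T_n)+\Phi^2(\mu_n)=0$. By the definition of the mapping-cone differential, $d^2(\mu_n,T_n)=(\delta^2(\mu_n),-\partial^1(T_n)-\Phi^2(\mu_n))=(0,0)$, so $(\mu_n,T_n)$ is a $2$-cocycle in $\C^\bullet_{\RBA}(\frakg)$.

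Since $\rmH^2_{\RBA}(\frakg)=0$, this cocycle is a coboundary, say $(\mu_n,T_n)=d^1(\psi_n,\phi_n)$ for some $(\psi_n,\phi_n)\in \C^1_{\RBA}(\frakg)=\C^1_\PLA(\frakg)\oplus\C^0_{\RBO}(\frakg)$, so that $\mu_n=\delta^1\psi_n$ and $T_n=-\partial^0\phi_n-\Phi^1\psi_n$. I would then form the formal isomorphism $\psi_t=\Id_\frakg-\psi_n t^n$ and let $(\mu_t',T_t')$ be the deformation obtained by transporting $(\mu_t,T_t)$ along $\psi_t$. Exactly as in the computation preceding the theorem on equivalent deformations, transport along $\Id_\frakg-\psi_n t^n$ leaves all orders below $n$ untouched and changes the order-$n$ data by $-d^1(\psi_n,0)$, so $(\mu_n',T_n')=(\mu_n,T_n)-d^1(\psi_n,0)=d^1(0,\phi_n)=(0,-\partial^0\phi_n)$. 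Thus $\mu_n'=0$, and the deformation is trivial through order $n$ provided the residual term $T_n'=-\partial^0\phi_n$ also vanishes, which would advance the induction.

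The step I expect to be the main obstacle is precisely this residue. A formal isomorphism is governed by a single series $\psi_t$, so the order-$n$ corrections it produces are constrained to the subspace $\{d^1(\psi_n,0)\}$, whereas cohomological triviality only furnishes $d^1(\psi_n,\phi_n)$ with a possibly nonzero $\C^0_{\RBO}$-component $\phi_n$, leaving $T_n'=-\partial^0\phi_n$ behind. The crux is therefore to absorb this term, i.e.\ to show that $(\mu_n,T_n)$ may be represented as $d^1(\psi_n,0)$; concretely, one must exploit the explicit shapes of $\partial^0$, $\Phi^1$ and $\delta^1$ on the regular bimodule (for example the identity $\Phi^1([\,\cdot\,,\phi_n]_\frakg)=\partial^0\phi_n$ is of the type one needs) to trade the $\phi_n$-contribution against the $\delta^1$-cocycle freedom available in $\psi_n$, without reintroducing a nonzero $\mu_n$-part. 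Once this inductive step is secured, the proof concludes by composing the formal isomorphisms from all stages: since the stage-$n$ isomorphism differs from $\Id_\frakg$ only in orders $\geqslant n$, each coefficient of $t^k$ stabilizes after finitely many stages, so the infinite composite converges in the $t$-adic topology to a formal isomorphism carrying $(\mu_t,T_t)$ to the trivial deformation, proving that $(\frakg,\mu,T)$ is rigid.
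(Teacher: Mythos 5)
Your overall strategy is the paper's: extract a $2$-cocycle at the lowest surviving order, kill it using $\rmH^2_{\RBA}(\frakg)=0$, transport along $\Id_\frakg-\psi_nt^n$, and iterate (your order-$n$ cocycle extraction from Equations~(\ref{Eq: general formal of deform product in RBA}) and~(\ref{Eq: general formal of deform RBO in RBA}) is in fact spelled out more carefully than in the paper, which just says ``by induction''). However, you explicitly leave unresolved the one step that carries the content of the proof: absorbing the $\C^0_{\RBO}$-component $\phi_n$ of the bounding cochain so that $(\mu_n,T_n)$ is exhibited as $d^1(\tilde\psi_n,0)$ for a single map $\tilde\psi_n$. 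Flagging this as ``the main obstacle'' and saying ``once this inductive step is secured'' does not secure it, so as written the argument is incomplete.

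The gap closes in one line, and your instinct about which identity to use was essentially right. Given $(\mu_n,T_n)=d^1(\psi_n,\phi_n)$, i.e.\ $\mu_n=\delta^1(\psi_n)$ and $T_n=-\partial^0(\phi_n)-\Phi^1(\psi_n)$, set $\tilde\psi_n:=\psi_n+\delta^0(\phi_n)$. Then $\delta^1(\tilde\psi_n)=\delta^1(\psi_n)=\mu_n$ because $\delta^1\circ\delta^0=0$, so no $\mu_n$-part is reintroduced; and since $\Phi^\bullet$ is a chain map (Proposition~\ref{Prop: Chain map Phi}, in degree $0$ this reads $\Phi^1(\delta^0(\phi_n))=\partial^0(\phi_n)$, which is also immediate from $\Phi^1(g)=g\circ T-T\circ g$), one gets $-\Phi^1(\tilde\psi_n)=-\Phi^1(\psi_n)-\partial^0(\phi_n)=T_n$. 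Hence $(\mu_n,T_n)=d^1(\tilde\psi_n,0)$, and transporting along $\Id_\frakg-\tilde\psi_nt^n$ kills the order-$n$ terms with no residue, exactly as in the paper's proof (which performs this substitution as $\psi_1=\psi_1'+\delta^0(x)$ at order $1$). The rest of your argument, including the $t$-adic convergence of the composite of the stage-wise isomorphisms, is fine.
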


\begin{proof}Let $(\frakg[[t]], \mu_t, T_t)$ be a $1$-parameter formal deformation of $(\frakg, \mu, T)$. By Proposition~\ref{Prop: Infinitesimal is 2-cocyle},
$(\mu_1, T_1)$ is a $2$-cocycle. By $\rmH^2_{\RBA}(\frakg)=0$, there exists a $1$-cochain $$(\psi_1', x) \in \C^1_\RBA(\frakg)= C^1_{\PLA}(\frakg)\oplus \Hom(k, \frakg)$$ such that
$(\mu_1, T_1) =  d^1(\psi_1', x), $
that is, $\mu_1=\delta^1(\psi_1')$ and $T_1=-\partial^0(x)-\Phi^1(\psi_1')$. Let $\psi_1=\psi_1'+\delta^0(x)$. Then
 $\mu_1= \delta^1(\psi_1)$ and $T_1=-\Phi^1(\psi_1)$, as it can be readily seen that $\Phi^1(\delta^0(x))=\partial^0(x)$.

Setting $\psi_t = \Id_\frakg -\psi_1t$, we have a deformation $(\frakg[[t]], \overline{\mu}_t, \overline{T}_t)$, where
$$\overline{\mu}_t=\psi_t^{-1}\circ \mu_t\circ (\psi_t\times \psi_t)$$
and $$\overline{T}_t=\psi_t^{-1}\circ T_t\circ \psi_t.$$
  It can be easily verify  that $\overline{\mu}_1=0, \overline{T}_1=0$. Then
    $$\begin{array}{rcl} \overline{\mu}_t&=& \mu+\overline{\mu}_2t^2+\cdots,\\
 T_t&=& T+\overline{T}_2t^2+\cdots.\end{array}$$
   By Equations~(\ref{Eq: general formal of deform product in RBA}) and (\ref{Eq: general formal of deform RBO in RBA}), we see that $(\overline{\mu}_2,  \overline{T}_2)$ is still a $2$-cocyle, so by induction, we can show that
  $ (\frakg[[t]], \mu_t , T_t) $ is equivalent to the trivial extension $(\frakg[[t]], \mu, T).$
Thus, $(\frakg,\mu,T)$ is rigid.

\end{proof}

\subsection{Formal deformations of Rota-Baxter operator  with product fixed}\

Let $(\frakg,\mu=\cdot, T)$ be a Rota-Baxter pre-Lie algebra of weight $\lambda$. Let us consider the case where  we only deform the Rota-Baxter operator with the product fixed. So    $\frakg[[t]]=\{\sum_{i=0}^\infty a_it^i\ | \ a_i\in \frakg, \forall i\geqslant 0\}$ is endowed with the product induced from that of $\frakg$, say,
$$(\sum_{i=0}^\infty a_it^i)(\sum_{j=0}^\infty b_jt^j)=\sum_{n=0}^\infty (\sum_{i+j=n\atop i,j\geqslant 0} a_ib_j)t^n.$$
Then $\frakg[[t]]$ becomes a flat $\bfk[[t]]$-algebra, whose product is still denoted by $\mu$.

 In this case, a  1-parameter formal deformation $(\mu_t,T_t)$ of    Rota-Baxter pre-Lie algebra $(\frakg, \mu,T)$  satisfies
 $\mu_i=0, \forall i\geqslant 1$. So Equation~(\ref{Eq: deform eq for  products in RBA}) degenerates and Equation~(\ref{Eq: Deform RB operator in RBA}) becomes
 \begin{eqnarray*}
 \mu\circ (T_t \ot T_t )&=& T_t\circ \Big(\mu\circ (\Id \ot T_t )+\mu \circ (T_t \ot \Id)+\lambda \mu \Big).
 \end{eqnarray*}
Expanding  these equations and comparing the coefficient of $t^n$, we obtain  that  $\{T_i\}_{i\geqslant0}$ have to  satisfy: for arbitrary $n\geqslant 0$,
\begin{eqnarray}\label{Eq: deform eq for RBO}	
\sum_{i+j=n\atop i, j\geqslant 0} 	\mu \circ(T_i\ot T_{j})&=&\sum_{i+j=n\atop i, j\geqslant 0} T_{ i}\circ \mu\circ (\Id\ot T_{j}) +\sum_{i+j=n\atop i, j\geqslant 0}T_{i}\circ\mu\circ (T_{j}\ot \Id)+\lambda T_n\circ\mu.
\end{eqnarray}

Obviously, when $n=0$,  Equation~(\ref{Eq: deform eq for RBO}) becomes
  exactly Equation~(\ref{Eq: Rota-Baxter relation}) defining Rota-Baxter operator $T=T_0$.

When $n=1$,    Equation~(\ref{Eq: deform eq for RBO}) has the form
 $$  	\mu \circ(T\ot T_1+T_1\otimes T)= T \circ \mu\circ (\Id\ot T_1) +T_1 \circ \mu\circ (\Id\ot T)+ T \circ\mu\circ (T_{1}\ot \Id)+  T_1 \circ\mu\circ (T\ot \Id)+  \lambda T_1\circ\mu$$
 which     says exactly that $\partial^1(T_1)=0\in \C^\bullet_{\RBO}(\frakg)$.
This proves the following result:
\begin{prop}
	Let $ T_t $ be a  1-parameter formal deformation of
	Rota-Baxter operator  $ T $ of weight $\lambda$. Then
	$ T_1 $ is a 1-cocycle in the cochain complex
	$\C_{\RBO}^\bullet(\frakg)$.
\end{prop}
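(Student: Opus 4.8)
The plan is to specialize the $n$-th order deformation constraint, Equation~(\ref{Eq: deform eq for RBO}), to the case $n=1$ and then recognize the resulting identity as the vanishing of the coboundary $\partial^1(T_1)$ in $\C^\bullet_{\RBO}(\frakg)$. Since $\C^1_{\RBO}(\frakg)=\Hom(\frakg,\frakg)$ and $T_1$ lives there, what must be shown is a single equality of maps $\frakg\ot\frakg\to\frakg$, so the argument is a term-by-term comparison with no combinatorial overhead.

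First I would extract the $n=1$ instance of Equation~(\ref{Eq: deform eq for RBO}). The only index pairs $(i,j)$ with $i+j=1$ and $i,j\geqslant 0$ are $(0,1)$ and $(1,0)$; using $T_0=T$, this reads
\begin{align*}
\mu\circ(T\ot T_1)+\mu\circ(T_1\ot T)
=\ &T\circ\mu\circ(\Id\ot T_1)+T_1\circ\mu\circ(\Id\ot T)\\
&+T\circ\mu\circ(T_1\ot\Id)+T_1\circ\mu\circ(T\ot\Id)+\lambda\,T_1\circ\mu.
\end{align*}

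Next I would unwind the differential $\partial^1$. By definition $\C^\bullet_{\RBO}(\frakg)$ is the pre-Lie cochain complex of $\frakg_\star$ with coefficients in the regular Rota-Baxter bimodule $_\rhd\frakg_\lhd$, so for a $1$-cochain $f$ one has $(\partial^1 f)(a,b)=a\rhd f(b)+f(a)\lhd b-f(a\star b)$. Substituting $a\rhd m=T(a)\pl m-T(a\pl m)$, $m\lhd b=m\pl T(b)-T(m\pl b)$ (here $T_M=T$ since we use the regular bimodule) and $a\star b=a\pl T(b)+T(a)\pl b+\lambda\,a\pl b$, and taking $f=T_1$, I would obtain $(\partial^1 T_1)(a,b)$ as a sum of seven terms. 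Rewriting these in terms of maps yields
\begin{align*}
\partial^1(T_1)=\ &\mu\circ(T\ot T_1)+\mu\circ(T_1\ot T)-T\circ\mu\circ(\Id\ot T_1)-T\circ\mu\circ(T_1\ot\Id)\\
&-T_1\circ\mu\circ(\Id\ot T)-T_1\circ\mu\circ(T\ot\Id)-\lambda\,T_1\circ\mu,
\end{align*}
which is precisely the difference of the two sides of the displayed $n=1$ equation. Hence $\partial^1(T_1)=0$, so $T_1$ is a $1$-cocycle.

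The only delicate point is the bookkeeping in this last step: one must correctly expand the twisted actions $\rhd,\lhd$ and the deformed product $\star$ inside $\partial^1$, keeping track of the internal $T$'s and of the weight term $\lambda\,a\pl b$ in $\star$, and then check that the signs coming from the pre-Lie coboundary match those obtained by transposing the right-hand side of the deformation equation. As this is the $n=1$ level there is no alternation to manage, so the obstacle is purely \emph{clerical} rather than structural; indeed this matching is exactly what was already signalled in the discussion immediately preceding the statement.
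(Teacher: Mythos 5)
Your proposal is correct and coincides with the paper's own argument: the paper likewise specializes Equation~(\ref{Eq: deform eq for RBO}) to $n=1$ and observes that the resulting identity is precisely $\partial^1(T_1)=0$ in $\C^\bullet_{\RBO}(\frakg)$. Your unwinding of $\partial^1$ via $\rhd$, $\lhd$ and $\star$ for the regular bimodule (with $T_M=T$) is accurate and merely makes explicit the bookkeeping the paper leaves implicit.
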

This means that the cochain complex $\C^\bullet_\RBO(\frakg)$ controls formal deformations of Rota-Baxter operators.

%In general, we can rewrite Equation~(\ref{Eq: deform eq for RBO}) as
%$$\partial^1(T_n)=\sum_{i=1}^{n-1} T_{n-i}\circ \mu \circ (T_i\otimes \Id+\Id\otimes T_i)-\sum_{i=1}^{n-1}
%\mu\circ (T_i\otimes T_{n-1}).$$

\bigskip

\section{Abelian extensions}

 In this section, we study abelian extensions of Rota-Baxter pre-Lie algebras and show that they are classified by the second cohomology, as one would expect of a good cohomology theory.

 Notice that a vector space $M$ together with a linear transformation $T_M:M\to M$ is naturally a Rota-Baxter pre-Lie algebra where the multiplication on $M$ is defined to be $u\cdot v=0$ for all $u,v\in M.$

 \begin{defn}
 	An   abelian extension  of Rota-Baxter pre-Lie algebras is a short exact sequence of  morphisms of Rota-Baxter pre-Lie algebras
 \begin{eqnarray}\label{Eq: abelian extension} 0\to (M,\cdot, T_M)\stackrel{i}{\to} (\hat{\frakg},\hat{\cdot},  \hat{T})\stackrel{p}{\to} (\frakg, \cdot, T)\to 0,
 \end{eqnarray}
 that is, there exists a commutative diagram:
 	\[\begin{CD}%\label{dia:ext}
 		0@>>> {M} @>i >> \hat{\frakg} @>p >> \frakg @>>>0\\
 		@. @V {T_M} VV @V {\hat{T}} VV @V T VV @.\\
 		0@>>> {M} @>i >> \hat{\frakg} @>p >> \frakg @>>>0,
 	\end{CD}\]
 where the Rota-Baxter pre-Lie algebra $(M, \cdot, T_M)$	satisfies  $u\cdot v=0$ for all $u,v\in M.$

 We will call $(\hat{\frakg},\hat{\cdot}, \hat{T})$ an abelian extension of $(\frakg,\cdot, T)$ by $(M,\cdot, T_M)$.
 \end{defn}

 \begin{defn}
 	Let $(\hat{\frakg}_1,\hat{\cdot}_1, \hat{T}_1)$ and $(\hat{\frakg}_2,\hat{\cdot}_2, \hat{T}_2)$ be two abelian extensions of $(\frakg,\cdot, T)$ by $(M,\cdot, T_M)$. They are said to be  isomorphic  if there exists an isomorphism of Rota-Baxter pre-Lie algebras $\zeta:(\hat{\frakg}_1,\hat{T}_1)\rar (\hat{\frakg}_2,\hat{T}_2)$ such that the following commutative diagram holds:
 	\begin{eqnarray}\label{Eq: isom of abelian extension}\begin{CD}%\label{dia:ext}
 		0@>>> {(M,\cdot, T_M)} @>i >> (\hat{\frakg}_1,\hat{\cdot}_1,{\hat{T}_1}) @>p >> (\frakg,\cdot, T) @>>>0\\
 		@. @| @V \zeta VV @| @.\\
 		0@>>> {(M,\cdot, T_M)} @>i >> (\hat{\frakg}_2,\hat{\cdot}_2,{\hat{T}_2}) @>p >> (\frakg,\cdot, T) @>>>0.
 	\end{CD}\end{eqnarray}
 \end{defn}

 A   section of an abelian extension $(\hat{\frakg},\hat{\cdot}, {\hat{T}})$ of $(\frakg,\cdot, T)$ by $(M,\cdot, T_M)$ is a linear map $s:\frakg\rar \hat{\frakg}$ such that $p\circ s=\Id_\frakg$.

 We will show that isomorphism classes of  abelian extensions of $(\frakg,\cdot, T)$ by $(M,\cdot, T_M)$ are in bijection with the second cohomology group   ${\rmH}_{\RBA}^2(\frakg,M)$.

 \bigskip

Let    $(\hat{\frakg},\hat{\cdot}, \hat{T})$ be  an abelian extension of $(\frakg,\cdot, T)$ by $(M,\cdot, T_M)$ having the form Equation~\eqref{Eq: abelian extension}. Choose a section $s:\frakg\rar \hat{\frakg}$. We   define
 $$
 a\cdot m:=s(a)\cdot m,\quad m\cdot a:=m\cdot s(a), \quad \forall a\in \frakg, m\in M.
 $$
 \begin{prop}\label{Prop: new RB bimodules from abelian extensions}
 	With the above notations, $(M, \cdot, T_M)$ is a Rota-Baxter  bimodule over $(\frakg,\cdot, T)$.
 \end{prop}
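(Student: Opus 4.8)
The plan is to verify the two defining identities of a Rota-Baxter bimodule from Definition~\ref{Def: Rota-Baxter bimodules}, namely that the section-induced actions $a\cdot m:=s(a)\cdot m$ and $m\cdot a:=m\cdot s(a)$ satisfy the pre-Lie bimodule axioms together with the two Rota-Baxter compatibility equations relating $T$, $\hat T$ and $T_M$. The key point making everything work is that $s$ is a section of the \emph{underlying} pre-Lie algebra surjection $p$, so $p(s(a)\hat\cdot s(b)-s(a\cdot b))=0$, which means $s(a)\hat\cdot s(b)-s(a\cdot b)\in M=\ker p$. Thus $s$ fails to be an algebra map only up to an element of $M$, and since $M$ is an abelian ideal (i.e. $u\cdot v=0$ for $u,v\in M$ and $M\hat\cdot M=0$ inside $\hat\frakg$), any such correction term annihilates other elements of $M$ under the multiplication. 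This is precisely what one needs to push the bimodule axioms down from $\hat\frakg$ to the actions on $M$.

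First I would check well-definedness and independence issues are moot: the actions are defined outright via $s$, so I only need the axioms. For the two pre-Lie bimodule identities, I would compute, say, $a\cdot(b\cdot m)-(a\cdot b)\cdot m$ by replacing each $a\cdot b$ with $s(a)\hat\cdot s(b)$ modulo the correction $c_{a,b}:=s(a)\hat\cdot s(b)-s(a\cdot b)\in M$; when this correction term then acts on $m\in M$ via $M\hat\cdot M=0$, it contributes nothing, so the identity for the actions on $M$ reduces exactly to the associator-symmetry identity for the genuine pre-Lie multiplication $\hat\cdot$ on $\hat\frakg$ restricted to $s(a),s(b),m$. Since $(\hat\frakg,\hat\cdot)$ is a pre-Lie algebra and $M$ is an ideal, that identity holds, giving the two bimodule axioms of Definition~\ref{Def: Rota-Baxter pre-Lie algebra}'s representation conditions.

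Next I would verify the two Rota-Baxter bimodule equations. Here the commutativity of the extension diagram is essential: $\hat T\circ i=i\circ T_M$ and $p\circ\hat T=T\circ p$, so $\hat T(s(a))-s(T(a))\in M$ and $\hat T|_M=T_M$. I would expand $T(a)\cdot T_M(m)=s(T(a))\hat\cdot\,\hat T(m)$ and replace $s(T(a))$ by $\hat T(s(a))$ modulo $M$ (the difference again dying against $\hat T(m)\in M$), then apply the Rota-Baxter identity~\eqref{Eq: Rota-Baxter relation} for $\hat T$ on $\hat\frakg$ to the pair $s(a),m$, and finally use $\hat T|_M=T_M$ to translate the resulting $\hat T$-terms back into $T_M$-terms and the definitions of the actions. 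The symmetric computation handles the right-action equation.

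The main obstacle I anticipate is purely bookkeeping: carefully tracking that every correction term $s(a)\hat\cdot s(b)-s(a\cdot b)$ and $\hat T(s(a))-s(T(a))$ lands in $M$ and is subsequently annihilated, either because it gets multiplied by another element of $M$ (using $M\hat\cdot M=0$) or because $\hat T$ restricted to $M$ is $T_M$. There is no conceptual difficulty beyond organizing these ``modulo $M$'' reductions cleanly; once one establishes the two observations $s(a)\hat\cdot s(b)\equiv s(a\cdot b)\pmod M$ and $\hat T\circ s\equiv s\circ T\pmod M$ together with $\hat T|_M=T_M$ and the abelian condition, the four required identities follow by direct substitution into the corresponding identities for $(\hat\frakg,\hat\cdot,\hat T)$, which are valid since it is an honest Rota-Baxter pre-Lie algebra.
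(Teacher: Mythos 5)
Your proposal is correct and follows essentially the same route as the paper: both rest on the observations that $s(a)\hat\cdot s(b)-s(a\cdot b)$ and $\hat T(s(a))-s(T(a))$ lie in $M$ and hence are killed when multiplied against elements of $M$ (since the multiplication on $M$ is trivial), after which the bimodule and Rota-Baxter compatibility identities are inherited directly from the corresponding identities for $(\hat\frakg,\hat\cdot,\hat T)$ applied to $s(a),s(b),m$, using $\hat T|_M=T_M$.
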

 \begin{proof}
 	For arbitrary $a,~b\in \frakg,\,m\in M$, since $s(a\cdot b)-s(a)\hat{\cdot} s(b)\in M$ implies $s(a\cdot b)\cdot m=(s(a)\hat{\cdot} s(b))\cdot m$, we have
 	\begin{eqnarray*}
&& a\cdot (b\cdot m) -(a\cdot b)\cdot  m\\
&=&s(a)\cdot (s(b)\cdot m)-s(a\cdot b)\cdot m\\
&=&  s(b)\cdot(s(a)\cdot m)-s(b\cdot a)\cdot m\\
&=& b\cdot (a\cdot m) -(b\cdot a)\cdot  m.
  \end{eqnarray*}
 	Hence,  this gives a $\frakg$-bimodule structure and other equation is similar.

 Moreover, ${\hat{T}}(s(a))-s(T(a))\in M$ means that  ${\hat{T}}(s(a))\cdot m=s(T(a))\cdot m$. Thus we have
 	\begin{align*}
 		T(a)\cdot T_M(m)&=s(T(a))\cdot T_M(m)\\
 		&=\hat{T}(s(a))\cdot T_M(m)\\
 		&=\hat{T}(\hat{T}(s(a))\cdot m+s(a)\cdot T_M(m)+\lambda s(a)\cdot m)\\
 		&=T_M(T(a)\cdot m+a\cdot T_M(m)+\lambda a\cdot m)
 	\end{align*}
 	It is similar to see $T_M(m)\cdot T(a)=T_M(T_M(m)\cdot a+m\cdot T(a)+\lambda m\cdot a)$.

 	Hence, $(M, \cdot, T_M)$ is a  Rota-Baxter  bimodule over $(\frakg, \cdot,  T)$.
 \end{proof}

 We  further  define linear maps $\psi:\frakg\ot \frakg\rar M$ and $\chi:\frakg\rar M$ respectively by
 \begin{align*}
 	\psi(a\ot b)&=s(a)\hat{\cdot}s(b)-s(a\cdot b),\quad\forall a,b\in \frakg,\\
 	\chi(a)&={\hat{T}}(s(a))-s(T(a)),\quad\forall a\in \frakg.
 \end{align*}

 %We now  show that $(\psi, \chi)$ is a  2-cocycle  of the Rota-Baxter pre-Lie algebra $(A,T)$ with   coefficients  in $(M,T_M)$.
 \begin{prop}\label{prop:2-cocycle}
 	 The pair
 	$(\psi,\chi)$ is a 2-cocycle  of   Rota-Baxter pre-Lie algebra $(\frakg,\cdot, T)$ with  coefficients  in the Rota-Baxter bimodule $(M,\cdot, T_M)$ introduced in Proposition~\ref{Prop: new RB bimodules from abelian extensions}.
 \end{prop}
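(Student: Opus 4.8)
The plan is to verify directly that $d^2(\psi,\chi)=0$ in $\C^\bullet_{\RBA}(\frakg,M)$. By definition $d^2(\psi,\chi)=\big(\delta^2(\psi),\,-\partial^1(\chi)-\Phi^2(\psi)\big)$, so the assertion splits into two separate identities: the pre-Lie cocycle condition $\delta^2(\psi)=0$ in $\C^\bullet_{\PLA}(\frakg,M)$, and the Rota-Baxter compatibility $\partial^1(\chi)+\Phi^2(\psi)=0$ in $\C^\bullet_{\RBO}(\frakg,M)$. The unifying idea is that each identity is nothing but one of the defining relations of $(\hat{\frakg},\hat{\cdot},\hat{T})$, transported to $\frakg$ through the section $s$; the hypothesis $u\cdot v=0$ on $M$ keeps every expansion strictly linear in the $M$-valued terms and prevents cross terms.

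For the first identity I would expand the pre-Lie associator of $s(a),s(b),s(c)$ inside $\hat{\frakg}$, using $s(a)\hat{\cdot}s(b)=s(a\cdot b)+\psi(a,b)$ together with the actions $s(a)\hat{\cdot}m=a\cdot m$ and $m\hat{\cdot}s(a)=m\cdot a$ for $m\in M$ supplied by Proposition~\ref{Prop: new RB bimodules from abelian extensions}, while $M\hat{\cdot}M=0$ kills all higher terms. Since $\frakg$ is itself pre-Lie, the terms $s\big((a\cdot b)\cdot c-a\cdot(b\cdot c)\big)$ are symmetric in $a$ and $b$, so imposing the pre-Lie identity of $\hat{\frakg}$ makes them cancel; after rewriting $\psi(a\cdot b,c)-\psi(b\cdot a,c)=\psi([a,b]_\frakg,c)$, what survives is exactly $(\delta^2\psi)(a,b,c)=0$.

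For the second identity I would start from the Rota-Baxter relation in $\hat{\frakg}$, namely $\hat{T}(s(a))\hat{\cdot}\hat{T}(s(b))=\hat{T}\big(s(a)\hat{\cdot}\hat{T}(s(b))+\hat{T}(s(a))\hat{\cdot}s(b)+\lambda\,s(a)\hat{\cdot}s(b)\big)$, and substitute $\hat{T}\circ s=s\circ T+\chi$ together with $\hat{T}|_M=T_M$ (read off from the commuting square). The left-hand side becomes $s(T(a)\cdot T(b))+\psi(T(a),T(b))+T(a)\cdot\chi(b)+\chi(a)\cdot T(b)$. On the right-hand side the argument of $\hat{T}$ is $s(a\star b)$ plus an explicit element of $M$, and the decisive step is that $T(a\star b)=T(a)\cdot T(b)$ because $T$ is a Rota-Baxter operator on $\frakg$, so the two $s(-)$ contributions agree and cancel. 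Comparing the surviving $M$-valued part with $\Phi^2(\psi)(a,b)=\psi(T(a),T(b))-\lambda T_M(\psi(a,b))-T_M(\psi(T(a),b))-T_M(\psi(a,T(b)))$ and with $\partial^1(\chi)(a,b)=a\rhd\chi(b)+\chi(a)\lhd b-\chi(a\star b)$, the latter unwound through the operations $\rhd,\lhd,\star$, then yields precisely $\Phi^2(\psi)=-\partial^1(\chi)$.

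I expect the main obstacle to be purely bookkeeping, and it lives in the second identity: one must expand the definitions of $\Phi^2$ and of $\partial^1$ in terms of $\rhd,\lhd$ and $\star$ and keep track of every sign and every occurrence of $T_M$. Conceptually there is no genuine difficulty, since both halves are immediate consequences of the pre-Lie and Rota-Baxter axioms in $\hat{\frakg}$ together with the fact that $s$ splits $p$; the abelian condition on $M$ is exactly what linearizes the whole computation.
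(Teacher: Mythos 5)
Your proposal is correct and is precisely the direct computation that the paper declines to write out (it states only ``the proof is by direct computations, so it is left to the reader''): splitting $d^2(\psi,\chi)=0$ into $\delta^2(\psi)=0$ and $\partial^1(\chi)+\Phi^2(\psi)=0$, deriving the first from the pre-Lie identity of $\hat{\frakg}$ applied to $s(a),s(b),s(c)$, and the second from the Rota-Baxter identity of $\hat{T}$ applied to $s(a),s(b)$ together with the cancellation $T(a\star b)=T(a)\cdot T(b)$. Your expansions of $\Phi^2$ and $\partial^1$ agree with the paper's definitions, and the resulting identity matches the $2$-cocycle equations displayed in the converse construction later in the same section.
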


 	The proof is by direct computations, so it is left to the reader.

 The choice of the section $s$ in fact determines a splitting
 $$\xymatrix{0\ar[r]&  M\ar@<1ex>[r]^{i} &\hat{\frakg}\ar@<1ex>[r]^{p} \ar@<1ex>[l]^{t}& \frakg \ar@<1ex>[l]^{s} \ar[r] & 0}$$
 subject to $t\circ i=\Id_M, t\circ s=0$ and $ it+sp=\Id_{\hat{\frakg}}$.
 Then there is an induced isomorphism of vector spaces
 $$\left(\begin{array}{cc} p& t\end{array}\right): \hat{\frakg}\cong   \frakg\oplus M: \left(\begin{array}{c} s\\ i\end{array}\right).$$
We can  transfer the Rota-Baxter pre-Lie algebra structure on $\hat{\frakg}$ to $\frakg\oplus M$ via this isomorphism.
  It is direct to verify that this  endows $\frakg\oplus M$ with a multiplication $\cdot_\psi$ and an Rota-Baxter operator $T_\chi$ defined by
 \begin{align}
 	\label{eq:mul}(a,m)\cdot_\psi(b,n)&=(a\cdot b,a\cdot n+m\cdot b+\psi(a\otimes b)),\,\forall a,b\in \frakg,\,m,n\in M,\\
 	\label{eq:dif}T_\chi(a,m)&=(T(a),\chi(a)+T_M(m)),\,\forall a\in \frakg,\,m\in M.
 \end{align}
 Moreover, we get an abelian extension
 $$0\to (M, \cdot, T_M)\stackrel{\left(\begin{array}{cc} s& i\end{array}\right) }{\to} (\frakg\oplus M, T_\chi)\stackrel{\left(\begin{array}{c} p\\ t\end{array}\right)}{\to} (\frakg, \cdot, T)\to 0$$
 which is easily seen to be  isomorphic to the original one \eqref{Eq: abelian extension}.

 \medskip

 Now we investigate the influence of different choices of   sections.

 \begin{prop}\label{prop: different sections give}
 \begin{itemize}
 \item[(i)] Different choices of the section $s$ give the same  Rota-Baxter bimodule structures on $(M, T_M)$;

 \item[(ii)]   the cohomological class of $(\psi,\chi)$ does not depend on the choice of sections.

 \end{itemize}

 \end{prop}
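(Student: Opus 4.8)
The plan is to exploit that any two sections differ by a map landing in the abelian ideal $M$, and that $M$ carries the trivial multiplication. Let $s_1,s_2:\frakg\to\hat{\frakg}$ be two sections. Since $p\circ(s_1-s_2)=0$, the difference factors through $\ker p=\im i$, so there is a unique linear map $\phi:\frakg\to M$ with $s_1-s_2=i\circ\phi$; throughout I identify $M$ with $\im i$ and write $T_M=i^{-1}\circ\hat{T}\circ i$, which is intrinsic to the extension.

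For part (i), I would simply compare the two induced actions. Using the definition $a\cdot_k m=s_k(a)\,\hat{\cdot}\,i(m)$, the difference is $a\cdot_1 m-a\cdot_2 m=\big(s_1(a)-s_2(a)\big)\,\hat{\cdot}\,i(m)=i(\phi(a))\,\hat{\cdot}\,i(m)$. Because $i$ is a morphism of Rota-Baxter pre-Lie algebras and $M$ has trivial product, this equals $i(\phi(a)\cdot m)=i(0)=0$, so the two left actions coincide; the identical argument handles the right action $m\cdot a$, and $T_M$ does not involve $s$ at all. Hence both sections produce the same Rota-Baxter bimodule structure on $(M,T_M)$.

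For part (ii), the plan is to show that the two $2$-cocycles differ by the coboundary $d^1(\phi,0)$ in $\C^\bullet_{\RBA}(\frakg,M)$. Writing $s_1=s_2+i\circ\phi$ and expanding $\psi_1-\psi_2$ by bilinearity of $\hat{\cdot}$, the term $\phi(a)\,\hat{\cdot}\,\phi(b)$ vanishes (both factors lie in $M$), and the surviving contributions are $a\cdot\phi(b)+\phi(a)\cdot b-\phi(a\cdot b)$, which is exactly $\delta^1(\phi)(a,b)$. For the operator component, expanding $\chi_1-\chi_2$ and using $\hat{T}\circ i=i\circ T_M$ gives $T_M\circ\phi-\phi\circ T$; comparing with the $n=1$ instance of the chain map $\Phi$, where the defining sum collapses to $\Phi^1(\phi)=\phi\circ T-T_M\circ\phi$, this is precisely $-\Phi^1(\phi)$. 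Therefore $(\psi_1-\psi_2,\chi_1-\chi_2)=(\delta^1(\phi),-\Phi^1(\phi))=d^1(\phi,0)$, so the cohomology class of $(\psi,\chi)$ is independent of the section.

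The conceptual content is entirely carried by $M$ being abelian, which kills every product of two elements of $M$; everything else is formal. I expect the only delicate point to be the bookkeeping in evaluating $\Phi^1$: one must verify that for $n=1$ the two correction sums in the definition of $\Phi^n$ reduce to the single term $T_M\circ\phi$, so that no $\partial^0$-contribution is needed and $g=0$ indeed suffices, and that the signs line up so that the second component is a genuine coboundary rather than merely a cocycle.
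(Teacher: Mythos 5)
Your proposal is correct and follows essentially the same route as the paper: both define the difference of sections as a map $\gamma=\phi:\frakg\to M$, use the triviality of the product on $M$ to show the induced actions agree, and exhibit $(\psi_1-\psi_2,\chi_1-\chi_2)$ as the coboundary $(\delta^1(\phi),-\Phi^1(\phi))=d^1(\phi,0)$. Your check that $\Phi^1(f)=f\circ T-T_M\circ f$ (the correction sums collapsing to the single term $T_M\circ f$) matches the paper's computation $\chi_1=\chi_2-\Phi^1(\gamma)$ exactly.
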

 \begin{proof}Let $s_1$ and $s_2$ be two distinct sections of $p$.
  We define $\gamma:\frakg\rar M$ by $\gamma(a)=s_1(a)-s_2(a)$.

  Since the Rota-Baxter pre-Lie algebra $(M, \cdot,  T_M)$	satisfies  $u\cdot v=0$ for all $u,v\in M$,
  $$s_1(a)\cdot m= s_2(a)\cdot m+\gamma(a)\cdot m=s_2(a)\cdot m.$$ So different choices of the section $s$ give the same  Rota-Baxter bimodule structures on $(M, \cdot, T_M)$;

  We   show that the cohomological class of $(\psi,\chi)$ does not depend on the choice of sections.   Then
 	\begin{align*}
 		\psi_1(a,b)&=s_1(a)\cdot  s_1(b)-s_1(a\cdot b)\\
 		&=(s_2(a)+\gamma(a))\cdot (s_2(b)+\gamma(b))-(s_2(a\cdot b)+\gamma(a\cdot b))\\
 		&=(s_2(a)\cdot s_2(b)-s_2(a\cdot b))+s_2(a)\cdot\gamma(b)+\gamma(a)\cdot s_2(b)-\gamma(a\cdot b)\\
 		&=(s_2(a)\cdot s_2(b)-s_2(a\cdot b))+a\cdot \gamma(b)+\gamma(a)\cdot b-\gamma(a\cdot b)\\
 		&=\psi_2(a,b)+\delta(\gamma)(a,b)
 	\end{align*}
 	and
 	\begin{align*}
 		\chi_1(a)&={\hat{T}}(s_1(a))-s_1(T(a))\\
 		&={\hat{T}}(s_2(a)+\gamma(a))-(s_2(T(a))+\gamma(T(a)))\\
 		&=({\hat{T}}(s_2(a))-s_2(T(a)))+{\hat{T}}(\gamma(a))-\gamma(T(a))\\
 		&=\chi_2(a)+T_M(\gamma(a))-\gamma(T(a))\\
 		&=\chi_2(a)-\Phi^1(\gamma)(a).
 	\end{align*}
 	That is, $(\psi_1,\chi_1)=(\psi_2,\chi_2)+d^1(\gamma)$. Thus $(\psi_1,\chi_1)$ and $(\psi_2,\chi_2)$ form the same cohomological class  {in $\rmH_{\RBA}^2(\frakg,M)$}.

 \end{proof}

 We show now the isomorphic abelian extensions give rise to the same cohomology classes.
 \begin{prop}Let $M$ be a vector space and  $T_M\in\End_\bfk(M)$. Then $(M,\cdot,  T_M)$ is a Rota-Baxter pre-Lie algebra with trivial multiplication.
 Let $(\frakg,\cdot, T)$ be a  Rota-Baxter pre-Lie algebra.
 Two isomorphic abelian extensions of Rota-Baxter pre-Lie algebra $(\frakg,\cdot,  T)$ by  $(M, \cdot,  T_M)$  give rise to the same cohomology class  in $\rmH_{\RBA}^2(\frakg,M)$.
 \end{prop}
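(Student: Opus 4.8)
The plan is to show that if $\zeta\colon(\hat{\frakg}_1,\hat{T}_1)\to(\hat{\frakg}_2,\hat{T}_2)$ is an isomorphism of abelian extensions fitting into diagram~\eqref{Eq: isom of abelian extension}, then the $2$-cocycles $(\psi_1,\chi_1)$ and $(\psi_2,\chi_2)$ produced from the two extensions differ by a coboundary, hence determine the same class in $\rmH_{\RBA}^2(\frakg,M)$. The key observation is that an isomorphism of abelian extensions lets us transport a section across $\zeta$: if $s_2\colon\frakg\to\hat{\frakg}_2$ is any section of $p_2$, then because $p_2\circ\zeta=p_1$, the map $s_1:=\zeta^{-1}\circ s_2$ is a section of $p_1$. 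Since Proposition~\ref{prop: different sections give} already tells us the cohomology class of $(\psi,\chi)$ is independent of the choice of section within a fixed extension, I am free to compute $(\psi_1,\chi_1)$ using precisely this transported section $s_1=\zeta^{-1}\circ s_2$.

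First I would fix a section $s_2$ of $p_2$ and set $s_1=\zeta^{-1}\circ s_2$, verifying $p_1\circ s_1=p_2\circ\zeta\circ s_1=p_2\circ s_2=\Id_\frakg$. Next I would compute $\psi_1$ directly from its definition: for $a,b\in\frakg$,
\[
\psi_1(a\ot b)=s_1(a)\,\hat{\cdot}_1\,s_1(b)-s_1(a\cdot b)=\zeta^{-1}\bigl(s_2(a)\,\hat{\cdot}_2\,s_2(b)\bigr)-\zeta^{-1}\bigl(s_2(a\cdot b)\bigr),
\]
where I use that $\zeta$ is an algebra homomorphism so $\zeta^{-1}$ is as well. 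Since $\zeta$ restricts to the identity on $M$ (the left square of~\eqref{Eq: isom of abelian extension} commutes with the identity on $M$) and the element $s_2(a)\,\hat{\cdot}_2\,s_2(b)-s_2(a\cdot b)=\psi_2(a\ot b)$ already lies in $M$, applying $\zeta^{-1}$ leaves it fixed, giving $\psi_1=\psi_2$. The same reasoning applied to $\chi_1(a)=\hat{T}_1(s_1(a))-s_1(T(a))$, using the compatibility $\zeta\circ\hat{T}_1=\hat{T}_2\circ\zeta$ and again $\zeta|_M=\Id_M$, yields $\chi_1=\chi_2$.

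Thus with this particular choice of section the two cocycles are literally equal, $(\psi_1,\chi_1)=(\psi_2,\chi_2)$. Combining this with Proposition~\ref{prop: different sections give}, which guarantees that any other choice of section changes each cocycle only by a coboundary $d^1(\gamma)$, we conclude that the cohomology classes of $(\psi_1,\chi_1)$ and $(\psi_2,\chi_2)$ in $\rmH_{\RBA}^2(\frakg,M)$ coincide regardless of the original choices. The one point requiring care — and the only place where the hypotheses are genuinely used — is the verification that $\zeta$ acts as the identity on the image of $M$ and intertwines the two Rota-Baxter operators and the two products; these are exactly the commutativity conditions encoded in diagram~\eqref{Eq: isom of abelian extension}, so no further obstacle arises and the remaining manipulations are purely formal.
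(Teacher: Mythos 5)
Your proposal is correct and follows essentially the same route as the paper: transport a section across the isomorphism $\zeta$ (the paper pushes $s_1$ forward via $\zeta$ while you pull $s_2$ back via $\zeta^{-1}$, which is immaterial), use $\zeta|_M=\Id_M$ to see that the two cocycles are literally equal for the transported pair of sections, and invoke the independence of the cohomology class from the choice of section. Your explicit appeal to Proposition~\ref{prop: different sections give} at the end is a slightly more careful packaging of a step the paper leaves implicit, but the argument is the same.
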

 \begin{proof}
  Assume that $(\hat{\frakg}_1, \hat{\cdot}_1,  {\hat{T}_1})$ and $(\hat{\frakg}_2, \hat{\cdot}_2, {\hat{T}_2})$ are two isomorphic abelian extensions of $(\frakg,\cdot, T)$ by $(M,\cdot, T_M)$ as is given in \eqref{Eq: isom of abelian extension}. Let $s_1$ be a section of $(\hat{\frakg}_1,\hat{\cdot}_1,  {\hat{T}_1})$. As $p_2\circ\zeta=p_1$, we have
 	\[p_2\circ(\zeta\circ s_1)=p_1\circ s_1=\Id_{\frakg}.\]
 	Therefore, $\zeta\circ s_1$ is a section of $(\hat{\frakg}_2,\hat{\cdot}_2, {\hat{T}_2})$. Denote $s_2:=\zeta\circ s_1$. Since $\zeta$ is a homomorphism of Rota-Baxter  algebras such that $\zeta|_M=\Id_M$, $\zeta(a\cdot m)=\zeta(s_1(a)\cdot m)=s_2(a)\cdot m=a\cdot m$, so $\zeta|_M: M\to M$ is compatible with the induced  Rota-Baxter bimodule structures.
 We have
 	\begin{align*}
 		\psi_2(a\ot b)&=s_2(a)\hat{\cdot}_2 s_2(b)-s_2(a\cdot b)=\zeta(s_1(a))\hat{\cdot}_2\zeta(s_1(b))-\zeta(s_1(a\cdot b))\\
 		&=\zeta(s_1(a)\hat{\cdot}_1s_1(b)-s_1(a\cdot b))=\zeta(\psi_1(a,b))\\
 		&=\psi_1(a,b)
 	\end{align*}
 	and
 	\begin{align*}
 		\chi_2(a)&={\hat{T}_2}(s_2(a))-s_2(T(a))={\hat{T}_2}(\zeta(s_1(a)))-\zeta(s_1(T(a)))\\
 		&=\zeta({\hat{T}_1}(s_1(a))-s_1(T(a)))=\zeta(\chi_1(a))\\
 		&=\chi_1(a).
 	\end{align*}
 	Consequently, two isomorphic abelian extensions give rise to the same element in {$\rmH_{\RBA}^2(\frakg,M)$}.
\end{proof}
 \bigskip

 Now we consider the reverse direction.

 Let $(M, \cdot, T_M)$ be a Rota-Baxter bimodule over Rota-Baxter pre-Lie algebra $(\frakg, \cdot,  T)$, given two linear maps  $\psi:\frakg\ot \frakg\rar M$ and $\chi:\frakg\rar M$, one can define  a multiplication $\cdot_\psi$ and an operator $T_\chi$  on  $\frakg\oplus M$ by Equations~(\ref{eq:mul})(\ref{eq:dif}).
 The following fact is important:
 \begin{prop}\label{prop:2-cocycle}
 	The triple $(\frakg\oplus M,\cdot_\psi,T_\chi)$ is a Rota-Baxter pre-Lie algebra   if and only if
 	$(\psi,\chi)$ is a 2-cocycle  of the Rota-Baxter pre-Lie algebra $(\frakg,\cdot, T)$ with  coefficients  in $(M,\cdot, T_M)$.
 In this case,    we obtain an abelian extension
  $$0\to (M, \cdot, T_M)\stackrel{\left(\begin{array}{cc} 0& \Id  \end{array}\right) }{\to} (\frakg\oplus M, T_\chi)\stackrel{\left(\begin{array}{c} \Id\\ 0\end{array}\right)}{\to} (\frakg,\cdot,  T)\to 0,$$
  and the canonical section $s=\left(\begin{array}{cc}   \Id  & 0\end{array}\right): (\frakg, T)\to (\frakg\oplus M, T_\chi)$ endows $M$ with the original Rota-Baxter bimodule structure.
 \end{prop}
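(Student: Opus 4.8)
The plan is to prove the equivalence by decomposing the two defining axioms of a Rota-Baxter pre-Lie algebra on $\frakg\oplus M$ and matching each against one component of the $\C^\bullet_\RBA$ 2-cocycle condition. Recall that $(\psi,\chi)\in\C^2_\RBA(\frakg,M)=\C^2_\PLA(\frakg,M)\oplus\C^1_\RBO(\frakg,M)$ is a 2-cocycle exactly when $d^2(\psi,\chi)=\big(\delta^2(\psi),\,-\partial^1(\chi)-\Phi^2(\psi)\big)=0$, that is, when $\delta^2(\psi)=0$ and $\partial^1(\chi)+\Phi^2(\psi)=0$. I will show that the first equation is precisely the pre-Lie (left-symmetry) axiom for $\cdot_\psi$ and the second is precisely the Rota-Baxter axiom for $T_\chi$; since $(\frakg\oplus M,\cdot_\psi,T_\chi)$ is a Rota-Baxter pre-Lie algebra iff both axioms hold, the stated ``if and only if'' follows.

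First I would compute the associator of $\cdot_\psi$ on $(a,m),(b,n),(c,p)$ and test its symmetry in the first two slots. The $\frakg$-component reproduces the associator of $(\frakg,\cdot)$, which is left-symmetric by hypothesis. In the $M$-component, because $M\cdot M=0$ no terms quadratic in $M$ appear, and the terms built only from the bimodule actions split into three families according to whether $p$, $n$, or $m$ sits in $M$; each such term equals its $a\leftrightarrow b,\ m\leftrightarrow n$ swap by the two pre-Lie bimodule relations of the representation $(M,\cdot)$, so they cancel in the difference associator minus swapped associator. What survives is built from the four $\psi$-terms $\psi(a\otimes b)\cdot c$, $a\cdot\psi(b\otimes c)$, $\psi(a\cdot b\otimes c)$, $\psi(a\otimes b\cdot c)$, and a direct comparison identifies it with $-(\partial_\PLA^2\psi)(a,b,c)$. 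Hence $\cdot_\psi$ is pre-Lie iff $\delta^2(\psi)=\partial_\PLA^2(\psi)=0$.

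Next I would expand the Rota-Baxter identity $T_\chi(X)\cdot_\psi T_\chi(Y)=T_\chi\big(X\cdot_\psi T_\chi(Y)+T_\chi(X)\cdot_\psi Y+\lambda\,X\cdot_\psi Y\big)$ for $X=(a,m)$, $Y=(b,n)$ and compare components. The $\frakg$-component is the Rota-Baxter identity of $T$ on $\frakg$ and holds automatically. In the $M$-component I would insert the two Rota-Baxter bimodule relations to rewrite $T(a)\cdot T_M(n)$ and $T_M(m)\cdot T(b)$; all terms involving $m$ or $n$ together with the structure maps then cancel pairwise, leaving nine terms depending only on $a,b$, namely $T(a)\cdot\chi(b)$, $\chi(a)\cdot T(b)$, $\psi(T(a)\otimes T(b))$, the correction $-\chi(a\star b)$, and four $T_M$-terms. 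Unwinding $\rhd,\lhd,\star$ in the $n=1$ differential gives $(\partial^1\chi)(a,b)=T(a)\cdot\chi(b)+\chi(a)\cdot T(b)-\chi(a\star b)-T_M(a\cdot\chi(b))-T_M(\chi(a)\cdot b)$, while the $n=2$ instance of the formula for $\Phi$ gives $\Phi^2(\psi)(a,b)=\psi(T(a)\otimes T(b))-\lambda\,T_M\psi(a\otimes b)-T_M\psi(T(a)\otimes b)-T_M\psi(a\otimes T(b))$. Matching term by term shows the $M$-component of the Rota-Baxter identity is exactly $(\partial^1\chi+\Phi^2\psi)(a,b)$, so $T_\chi$ is a Rota-Baxter operator iff $\partial^1(\chi)+\Phi^2(\psi)=0$. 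Combined with the previous paragraph, this yields the equivalence.

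For the final assertions, assuming $(\psi,\chi)$ is a 2-cocycle, the inclusion $m\mapsto(0,m)$ and the projection $(a,m)\mapsto a$ are visibly morphisms of Rota-Baxter pre-Lie algebras intertwining $T_M$, $T_\chi$ and $T$, giving the displayed abelian extension; evaluating the induced bimodule structure along the canonical section $s\colon a\mapsto(a,0)$ gives $s(a)\cdot_\psi(0,m)=(0,a\cdot m)$ and $T_\chi(0,m)=(0,T_M(m))$, recovering the original Rota-Baxter bimodule on $M$. I expect the main obstacle to lie in the $M$-component of the Rota-Baxter identity: after inserting the bimodule relations one must check that the surviving weight-$\lambda$ terms assemble on the nose into the combinatorially shaped $\Phi^2(\psi)$ (with the correct $\lambda^{\,n-k}$ coefficients) rather than into a merely cohomologous expression, so that the identification with $\partial^1(\chi)+\Phi^2(\psi)$ is exact.
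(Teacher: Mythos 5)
Your proposal is correct and follows essentially the same route as the paper: it identifies the left-symmetry of $\cdot_\psi$ with $\delta^2(\psi)=0$ (the paper's Equation~\eqref{eq:mc}) and the Rota--Baxter identity for $T_\chi$ with $\partial^1(\chi)+\Phi^2(\psi)=0$, after cancelling the $m,n$-dependent terms via the bimodule and Rota--Baxter bimodule relations. Your explicit formulas for $\partial^1(\chi)$ and $\Phi^2(\psi)$ agree with the $n=1$ and $n=2$ instances of the paper's definitions, so the identification is exact as you anticipated; the paper merely states these computations more tersely.
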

 \begin{proof}
 	If $(\frakg\oplus M,\cdot_\psi,T_\chi)$ is a Rota-Baxter pre-Lie algebra, then $\cdot_\psi$ implies
 	\begin{align}\label{eq:mc}
 		&a\cdot \psi(b\ot c)-\psi(a\cdot b\ot c)+\psi(a\ot b\cdot c)-\psi(a\ot b)\cdot c\\
 &=b\cdot \psi(a\ot c)-\psi(b\cdot a\ot c)+\psi(b\ot a\cdot c)-\psi(b\ot a)\cdot c\nonumber,
 	\end{align}
 	which means $\delta^2(\phi)=0$ in $\C^\bullet(\frakg,M)$.
 	Since $T_\chi$ is a Rota-Baxter operator,
 	for arbitrary $a,b\in \frakg, m,n\in M$, we have
 	$$T_\chi((a,m))\cdot_\psi T_\chi((b,n))=T_\chi\Big(T_\chi(a,m)\cdot_\psi(b,n)+(a,m)\cdot_\psi T_\chi(b,n)+\lambda(a,m)\cdot_\psi(b,n)\Big)$$
 	Then $\chi,\psi$ satisfy the following equations:
 	\begin{align*}
 		&T(a)\cdot \chi(b)+\chi(a)\cdot T(b)+\psi(T(a)\ot T(b))\\
 		=&T_M(\chi(a)\cdot b)+T_M(\psi(T(a)\ot b))+\chi(T(a)\cdot b)\\
 		&+T_M(a\chi(b))+T_M(\psi(a\ot T(b)))+\chi(a\cdot T(b))\\
 		&+\lambda T_M(\psi(a\ot b))+\lambda \chi(a\cdot b).
 	\end{align*}
 	
 	That is,
 	\[ \partial^1(\chi)+\Phi^2(\psi)=0.\]
 	Hence, $(\psi,\chi)$ is a  2-cocycle.
 	
 	 Conversely, if $(\psi,\chi)$ is a 2-cocycle, one can easily check that $(\frakg\oplus M,\cdot_\psi,T_\chi)$ is a  Rota-Baxter pre-Lie algebra.

  The last statement is clear.
 \end{proof}

 Finally, we show the following result:
 \begin{prop}
 	Two cohomologous $2$-cocyles give rise to isomorphic abelian extensions.
 \end{prop}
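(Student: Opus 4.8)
The plan is to write down an explicit isomorphism of abelian extensions between the two extensions $(\frakg\oplus M,\cdot_{\psi_1},T_{\chi_1})$ and $(\frakg\oplus M,\cdot_{\psi_2},T_{\chi_2})$ attached to the two cocycles through Equations~\eqref{eq:mul} and \eqref{eq:dif}. By hypothesis the cocycles are cohomologous, so there is a $1$-cochain $(\gamma_1,\gamma_0)\in \C^1_\RBA(\frakg,M)=\C^1_\PLA(\frakg,M)\oplus\C^0_\RBO(\frakg,M)=\Hom(\frakg,M)\oplus M$ with
\[(\psi_1,\chi_1)-(\psi_2,\chi_2)=d^1(\gamma_1,\gamma_0)=\bigl(\delta^1(\gamma_1),\,-\partial^0(\gamma_0)-\Phi^1(\gamma_1)\bigr).\]
First I would absorb the $\C^0_\RBO$-component into the $\C^1_\PLA$-component: set $\tilde\gamma:=\gamma_1+\delta^0(\gamma_0)\in\Hom(\frakg,M)$. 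Since $(\C^\bullet_\PLA,\delta^\bullet)$ is a complex we have $\delta^1\circ\delta^0=0$, and since $\Phi^\bullet$ is a chain map (Proposition~\ref{Prop: Chain map Phi}) we have $\Phi^1\circ\delta^0=\partial^0$, the same identity used in the proof of rigidity. Hence $\delta^1(\tilde\gamma)=\delta^1(\gamma_1)=\psi_1-\psi_2$ and $\Phi^1(\tilde\gamma)=\Phi^1(\gamma_1)+\partial^0(\gamma_0)=-(\chi_1-\chi_2)$, so without loss of generality the cohomology relation is witnessed by a single map $\tilde\gamma\colon\frakg\to M$, namely
\[\psi_1-\psi_2=\delta^1(\tilde\gamma),\qquad \chi_1-\chi_2=-\Phi^1(\tilde\gamma).\]

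Next I would define $\zeta\colon\frakg\oplus M\to\frakg\oplus M$ by $\zeta(a,m)=(a,\,m+\tilde\gamma(a))$. This is manifestly a linear bijection (with inverse $(a,m)\mapsto(a,m-\tilde\gamma(a))$), it restricts to $\Id_M$ on $M$ and covers $\Id_\frakg$, so it fits the commutative diagram~\eqref{Eq: isom of abelian extension}; what remains is to check that $\zeta$ is a morphism of Rota-Baxter pre-Lie algebras. Expanding $\zeta\bigl((a,m)\cdot_{\psi_1}(b,n)\bigr)$ and $\zeta(a,m)\cdot_{\psi_2}\zeta(b,n)$ via~\eqref{eq:mul}, the common module terms $a\cdot n$ and $m\cdot b$ cancel and the two sides agree precisely when $\psi_1(a\otimes b)-\psi_2(a\otimes b)=a\cdot\tilde\gamma(b)+\tilde\gamma(a)\cdot b-\tilde\gamma(a\cdot b)=\delta^1(\tilde\gamma)(a\otimes b)$, which is the first relation above. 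Likewise, comparing $\zeta\bigl(T_{\chi_1}(a,m)\bigr)$ with $T_{\chi_2}\bigl(\zeta(a,m)\bigr)$ via~\eqref{eq:dif}, the $T_M(m)$ terms cancel and equality holds exactly when $\chi_1(a)-\chi_2(a)=T_M(\tilde\gamma(a))-\tilde\gamma(T(a))=-\Phi^1(\tilde\gamma)(a)$, using $\Phi^1(f)=f\circ T-T_M\circ f$; this is the second relation above. Therefore $\zeta$ is the desired isomorphism of abelian extensions, and cohomologous cocycles yield isomorphic extensions.

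The computations here are routine; the only genuinely delicate points are bookkeeping ones. The main obstacle is that one must track the two components of the coboundary $d^1$ separately and observe that the $\C^0_\RBO$-part $\gamma_0$ can be folded into $\tilde\gamma$ through the chain-map identity $\Phi^1\circ\delta^0=\partial^0$; otherwise an isomorphism of the naive form $(a,m)\mapsto(a,m+\gamma_1(a))$ would fail to intertwine the operators $T_{\chi_1}$ and $T_{\chi_2}$. Secondly, the signs must be matched with care: multiplicativity of $\zeta$ reproduces the $\C^2_\PLA$-component of the relation while compatibility with the Rota-Baxter operators reproduces the $\C^1_\RBO$-component, and it is precisely the convention $\Phi^1(f)=f\circ T-T_M\circ f$, hence the minus sign in $\chi_1-\chi_2=-\Phi^1(\tilde\gamma)$, that makes both identifications hold simultaneously.
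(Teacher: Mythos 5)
Your proof is correct and follows essentially the same route as the paper: fold the $\C^0_{\RBO}$-component into the $\C^1_{\PLA}$-component via the identity $\partial^0=\Phi^1\circ\delta^0$, and then define the isomorphism $\zeta$ by shifting the $M$-coordinate by the resulting map $\tilde\gamma$ (the paper writes $\zeta(a,m)=(a,-\gamma(a)+m)$, which is just your map read in the opposite direction, i.e.\ its inverse). You also carry out the verification that $\zeta$ intertwines the products and the Rota--Baxter operators, which the paper leaves implicit.
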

 \begin{proof}

 	  Given two 2-cocycles $(\psi_1,\chi_1)$ and $(\psi_2,\chi_2)$, we can construct two abelian extensions $(\frakg\oplus M,\cdot_{\psi_1},T_{\chi_1})$ and  $(\frakg\oplus M,\cdot_{\psi_2},T_{\chi_2})$ via Equations~\eqref{eq:mul} and \eqref{eq:dif}. If they represent the same cohomology  class {in $\rmH_{\RBA}^2(\frakg,M)$}, then there exists two linear maps $\gamma_0:k\rightarrow M, \gamma_1:\frakg\to M$ such that $$(\psi_1,\chi_1)=(\psi_2,\chi_2)+(\delta^1(\gamma_1),-\Phi^1(\gamma_1)-\partial^0(\gamma_0)).$$
 	Notice that $\partial^0=\Phi^1\circ\delta^0$. Define $\gamma: \frakg\rightarrow M$ to be $\gamma_1+\delta^0(\gamma_0)$. Then $\gamma$ satisfies
 	\[(\psi_1,\chi_1)=(\psi_2,\chi_2)+(\delta^1(\gamma),-\Phi^1(\gamma)).\]
 	
 	Define $\zeta:\frakg\oplus M\rar \frakg\oplus M$ by
 	\[\zeta(a,m):=(a, -\gamma(a)+m).\]
 	Then $\zeta$ is an isomorphism of these two abelian extensions $(\frakg\oplus M,\cdot_{\psi_1},T_{\chi_1})$ and  $(\frakg\oplus M,\cdot_{\psi_2},T_{\chi_2})$.
 \end{proof}

 \bigskip

\section{Skeletal Rota-Baxter  pre-Lie 2-algebras and crossed modules}

In this section, we introduce the notion of a Rota-Baxter pre-Lie 2-algebra and
show that skeletal Rota-Baxter pre-Lie 2-algebras are classified by 3-cocycles of Rota-Baxter pre-Lie
algebras.

\subsection{Skeletal Rota-Baxter  pre-Lie 2-algebras}

In the following, we first recall the definition of pre-Lie 2-algebras from \cite{S19}.
\begin{defn}\label{Def: preLie 2-algebra}
A pre-Lie 2-algebra $\mathcal{G}=\mathfrak{g}_0\oplus \mathfrak{g}_1$ over $\mathbf{k}$ consists of the following data:

$\bullet$~  a  linear map: $d: \mathfrak{g}_1\rightarrow \mathfrak{g}_0$,

$ \bullet$~   bilinear maps $l_2 : \mathfrak{g}_i \otimes \mathfrak{g}_j \rightarrow \mathfrak{g}_{i+j}$, where $0 \leq i + j \leq 1$,

$ \bullet$ ~  a trilinear map $l_3 : \wedge^2\mathfrak{g}_0 \otimes \mathfrak{g}_0 \rightarrow \mathfrak{g}_1$,\\
such that for all $x, y, z, w \in \mathfrak{g}_0$ and $\alpha, \beta \in \mathfrak{g}_1$, the following equalities are satisfied:
\begin{eqnarray*}
&&(a)~dl_2(x, \alpha)=l_2(x, d(\alpha)),\\
&&(b)~dl_2(\alpha, x)=l_2(d(\alpha), x),\\
&&(c)~l_2(d(\alpha), \beta)=l_2(\alpha, d(\beta)),\\
&&(e_1)~dl_3(x, y, z)=l_2(x, l_2(y, z))-l_2(l_2(x, y), z)-l_2(y, l_2(x, z))+l_2(l_2(y, x), z),\\
&&(e_2)~l_3(x, y, d(\alpha))=l_2(x, l_2(y, \alpha))-l_2(l_2(x, y), \alpha)-l_2(y, l_2(x, \alpha))+l_2(l_2(y, x), \alpha),\\
&& (e_3)~l_3( d(\alpha), x, y)=l_2(\alpha, l_2(x, y))-l_2(l_2(\alpha, x), y)-l_2(x, l_2(\alpha, y))+l_2(l_2(x, \alpha), y),\\
&&(f)~ l_2(w,l_3(x, y, z))- l_2(x,l_3(w, y, z)) + l_2(y,l_3(w, x, z)) + l_2(l_3(x, y, w), z)\\
&&~~~~-l_2(l_3(w, y, x), z) + l_2(l_3(w, x, y), z) - l_3(x, y,l_2(w, z)) + l_3(w, y,l_2(x, z))\\
&&~~~~-l_3(w, x,l_2(y, z))-l_3(l_2(w, x)- l_2(x, w), y, z) + l_3(l_2(w, y)- l_2(y, w), x, z)\\
&&~~~~-l_3(l_2(x, y)-l_2(y, x), w, z) = 0.
\end{eqnarray*}
\end{defn}
inspired by \cite{JS21}, we give the notion of a Rota-Baxter pre-Lie 2-algebra of weight $\lambda$.
\begin{defn}\label{Def: RBpreLie 2-algebra}
A Rota-Baxter pre-Lie 2-algebra of weight $\lambda$ consists of a pre-Lie 2-algebra $\mathcal{G} = (\mathfrak{g}_0, \mathfrak{g}_1, d, l_2, l_3)$
and a  Rota-Baxter operator $\Theta = (T_0, T_1, T_2)$ on $\mathcal{G}$, where $T_0:\mathfrak{g}_0\rightarrow \mathfrak{g}_0, ~~T_1: \mathfrak{g}_1\rightarrow \mathfrak{g}_1$ and $T_2: \mathfrak{g}_0\otimes \mathfrak{g}_0\rightarrow \mathfrak{g}_1$ satisfying  the following equalities:
\begin{eqnarray*}
	(i)&& T_0\circ d=d\circ T_1\\
(ii)&& T_0(l_2(T_0(x), y)+l_2(x, T_0(y))+\lambda l_2(x, y))-l_2(T_0(x), T_0(y))=dT_2(x, y);\\
(iii)&&T_1(l_2(T_1(\alpha), x)+l_2(\alpha, T_0(x))+\lambda l_2(\alpha, x))-l_2(T_1(\alpha), T_0(x))=T_2(d(\alpha), x);\\
(iv)&&  T_1(l_2(x, T_1(\alpha))+l_2( T_0(x), \alpha)+\lambda l_2( x, \alpha))-l_2( T_0(x), T_1(\alpha))=T_2(x, d(\alpha));\\
(v)&& l_2(T_0(x_1), T_2(x_2, x_3))-l_2(T_0(x_2), T_2(x_1, x_3))+l_2( T_2(x_2, x_1), T_0(x_3))-l_2( T_2(x_1, x_2), T_0(x_3))\\
&&-T_2(x_2, l_2(T_0(x_1), x_3)+l_2(x_1, T_0(x_3))+\lambda l_2(x_1, x_3))+T_2(x_1, l_2(T_0(x_2), x_3)+l_2( x_2, T_0(x_3))\\
&&+\lambda l_2(x_2, x_3))-T_2(l_2(T_0(x_1), x_2)+l_2(x_1, T_0(x_2))+\lambda l_2(x_1, x_2), x_3)+T_2(l_2(T_0(x_2), x_1)\\
&&+l_2(x_2, T_0(x_1))+\lambda l_2(x_2, x_1), x_3)+l_3(T_0(x_1),T_0(x_2), T_0(x_3) )-T_1(l_3(x_1, x_2, x_3))=0.
\end{eqnarray*}
\end{defn}
We will denote a Rota-Baxter pre-Lie 2-algebra by $(\mathcal{G}, \Theta)$. A Rota-Baxter pre-Lie 2-algebra is said to
be {\bf skeletal} if $d = 0$.    A Rota-Baxter pre-Lie 2-algebra is said to
be {\bf strict} if $l_3 = 0, T_2=0$.

Let $(\mathcal{G}, \Theta)$ be a skeletal Rota-Baxter pre-Lie 2-algebra of weight $\lambda$. Then $\mathcal{G}$ is a  skeletal pre-Lie 2-algebra. Therefore, we have
\begin{eqnarray}
&&~0=l_2(x, l_2(y, z))-l_2(l_2(x, y), z)-l_2(y, l_2(x, z))+l_2(l_2(y, x), z),\\
&&~0=l_2(x, l_2(y, \alpha))-l_2(l_2(x, y), \alpha)-l_2(y, l_2(x, \alpha))+l_2(l_2(y, x), \alpha),\\
&& 0=l_2(\alpha, l_2(x, y))-l_2(l_2(\alpha, x), y)-l_2(x, l_2(\alpha, y))+l_2(l_2(x, \alpha), y),\\
&&0= l_2(w,l_3(x, y, z))- l_2(x,l_3(w, y, z)) + l_2(y,l_3(w, x, z)) + l_2(l_3(x, y, w), z)\label{pre-lie 2-algebra relation}\\
&&~~~~-l_2(l_3(w, y, x), z) + l_2(l_3(w, x, y), z) - l_3(x, y,l_2(w, z)) + l_3(w, y,l_2(x, z))\nonumber\\
&&~~~~-l_3(w, x,l_2(y, z))-l_3(l_2(w, x)- l_2(x, w), y, z) + l_3(l_2(w, y)- l_2(y, w), x, z)\nonumber\\
&&~~~~-l_3(l_2(x, y)-l_2(y, x), w, z), \nonumber
\end{eqnarray}
for all $x,~y,~z,~w\in \mathfrak{g}_0$ and $\alpha\in \mathfrak{g}_1$.

\begin{thm}
There is a one-to-one correspondence between skeletal  Rota-Baxter pre-Lie 2-algebras of weight $\lambda$ and 3-cocycles of  Rota-Baxter pre-Lie 2-algebras of weight $\lambda$.

\end{thm}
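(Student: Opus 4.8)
The plan is to establish a bijection by matching the structural data on both sides and showing the defining axioms correspond exactly to the cocycle condition. First I would take a skeletal Rota-Baxter pre-Lie 2-algebra $(\mathcal{G},\Theta)$ with $d=0$. Since $d=0$, the underlying graded vector space is $\mathfrak{g}_0\oplus \mathfrak{g}_1$ where $\mathfrak{g}_0=(\mathfrak{g}_0,l_2)$ becomes an honest pre-Lie algebra (axiom $(e_1)$ with $d=0$ gives exactly pre-Lie associativity), $T_0$ becomes a Rota-Baxter operator of weight $\lambda$ on $\mathfrak{g}_0$ (axiom $(ii)$ with $d=0$ forces $l_2(T_0(x),T_0(y))=T_0(l_2(T_0(x),y)+l_2(x,T_0(y))+\lambda l_2(x,y))$, which is Equation~(\ref{Eq: Rota-Baxter relation})), and $(\mathfrak{g}_1, T_1)$ should be read as a Rota-Baxter bimodule over $(\mathfrak{g}_0,l_2,T_0)$. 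The plan is to check that the maps $S,P$ encoding $l_2(\mathfrak{g}_0\otimes\mathfrak{g}_1)$ and $l_2(\mathfrak{g}_1\otimes\mathfrak{g}_0)$, together with $T_1$, satisfy the bimodule and Rota-Baxter bimodule axioms of Definitions~\ref{Def: Rota-Baxter bimodule} and \ref{Def: Rota-Baxter bimodules}: axioms $(a)$--$(c)$ of pre-Lie 2-algebras become representation axioms when $d=0$, and axioms $(iii)$, $(iv)$ with $d=0$ become precisely the two Rota-Baxter bimodule equations.

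Next I would interpret the remaining data $(l_3,T_2)$ as a $3$-cochain in $\C^3_\RBA(\mathfrak{g}_0,\mathfrak{g}_1) = \C^3_\PLA(\mathfrak{g}_0,\mathfrak{g}_1)\oplus \C^2_\RBO(\mathfrak{g}_0,\mathfrak{g}_1)$. Here $l_3\in \Hom(\wedge^2\mathfrak{g}_0\otimes\mathfrak{g}_0,\mathfrak{g}_1)=\C^3_\PLA(\mathfrak{g}_0,\mathfrak{g}_1)$ and $T_2\in \Hom(\wedge\mathfrak{g}_0\otimes\mathfrak{g}_0,\mathfrak{g}_1)=\C^2_\RBO(\mathfrak{g}_0,\mathfrak{g}_1)$. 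I would then verify the two components of the cocycle equation $d^3(l_3,T_2)=0$, namely $\delta^3(l_3)=0$ and $\partial^2(T_2)+\Phi^3(l_3)=0$. The first identity should match Equation~(\ref{pre-lie 2-algebra relation}), i.e.\ axiom $(f)$ of the skeletal pre-Lie 2-algebra, which is exactly the statement that $l_3$ is a pre-Lie $3$-cocycle. The decisive point is to show that axiom $(v)$ of Definition~\ref{Def: RBpreLie 2-algebra}, with $d=0$, is precisely $\partial^2(T_2)+\Phi^3(l_3)=0$: one expands $\partial^2(T_2)$ using the explicit $\RBO$-differential from Section~\ref{Subsect: cohomology RB operator} (with $n=2$) and $\Phi^3(l_3)$ using the formula for $\Phi^n$ with $n=3$, then matches term by term against the $l_2(T_0(-),T_2(-))$, $T_2(l_2(T_0(-),-)+\cdots)$, $l_3(T_0,T_0,T_0)$ and $T_1\circ l_3$ summands of $(v)$.

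The converse direction runs the same correspondence backwards: given a Rota-Baxter pre-Lie algebra $(\mathfrak{g}_0,l_2,T_0)$, a Rota-Baxter bimodule $(\mathfrak{g}_1,T_1)$, and a $3$-cocycle $(l_3,T_2)$, I would define $d=0$ and assemble the data into a skeletal Rota-Baxter pre-Lie 2-algebra, checking that every axiom of Definition~\ref{Def: RBpreLie 2-algebra} holds because it is either a bimodule/representation axiom or one of the two components of the cocycle identity. One then verifies the two assignments are mutually inverse, which is immediate since no data is lost or added in either direction.

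The main obstacle I anticipate is the bookkeeping in matching axiom $(v)$ with $\partial^2(T_2)+\Phi^3(l_3)=0$. The difficulty is twofold: one must correctly evaluate $\Phi^3(l_3)$, whose definition involves the weighted sums $\sum_k\lambda^{n-k}$ over insertions of $T$ into all but one (or two) slots, so for $n=3$ there are several terms with different numbers of $T_0$'s and an explicit $\lambda$-dependence; and one must use the $\RBO$-differential $\partial^2$ built from the \emph{twisted} operations $\rhd,\lhd,\star$ of Propositions~\ref{Prop: new RB pre-Lie algebra} and~\ref{Prop: new-bimodule}, then rewrite everything back in terms of the original $l_2,T_0,T_1$ via $a\rhd m = T_0(a)\cdot m - T_1(a\cdot m)$ and $a\star b = l_2(a,T_0(b))+l_2(T_0(a),b)+\lambda l_2(a,b)$. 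Verifying that the $\lambda$-weighted terms and the signs align perfectly so that the combination collapses exactly to $(v)$ is the computational heart of the proof; the rest is routine identification of representations and cocycles with $2$-algebra data.
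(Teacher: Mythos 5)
Your proposal is correct and follows essentially the same route as the paper: identify $(\mathfrak{g}_0,l_2,T_0)$ as a Rota-Baxter pre-Lie algebra and $(\mathfrak{g}_1,T_1)$ as a Rota-Baxter bimodule over it, set $(f,\theta)=(l_3,T_2)\in \C^3_{\PLA}\oplus\C^2_{\RBO}$, and match axiom $(f)$ of the skeletal pre-Lie 2-algebra with $\delta^3(l_3)=0$ and axiom $(v)$ with $\partial^2(T_2)+\Phi^3(l_3)=0$, then reverse the construction. Your write-up is in fact more explicit than the paper's (which merely asserts the term-by-term match and omits the bimodule identification), but the underlying argument is identical.
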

\begin{proof}
Let $(\mathcal{G}, \Theta)$ be a skeletal Rota-Baxter pre-Lie 2-algebra of weight $\lambda$. Then $(\mathfrak{g}_0, l_2, T_0)$ is a Rota-Baxter pre-Lie algebras  of weight $\lambda$. Define linear maps
$f: \mathfrak{g}_0\otimes \mathfrak{g}_0 \otimes \mathfrak{g}_0\rightarrow \mathfrak{g}_1$ and $\theta: \mathfrak{g}_0 \otimes \mathfrak{g}_0\rightarrow \mathfrak{g}_1$ by
\begin{eqnarray*}
& f(x, y, z)=l_3(x, y, z),\\
& \theta(x, y)=T_2(x, y).
\end{eqnarray*}
By (iv) in Definition \ref{Def: RBpreLie 2-algebra}, we have
\begin{eqnarray}\label{RB pre-lie 2-algebra relation}
&& -T_0(x_1)\cdot\theta(x_2, x_3)+T_0(x_2)\cdot\theta(x_1, x_3))- \theta(x_2, x_1)\cdot T_0(x_3)+\theta(x_1, x_2)\cdot T_0(x_3)\nonumber\\
&&+\theta(x_2, T_0(x_1)\cdot x_3+x_1\cdot T_0(x_3)+\lambda x_1 \cdot x_3)-\theta(x_1, T_0(x_2)\cdot x_3+x_2\cdot T_0(x_3)\\
&&+\lambda x_2\cdot x_3)+\theta(T_0(x_1)\cdot x_2+x_1\cdot T_0(x_2)+\lambda x_1\cdot x_2, x_3)-\theta(T_0(x_2)\cdot x_1\nonumber\\
&&+x_2\cdot T_0(x_1)+\lambda x_2\cdot x_1, x_3)+f(T_0(x_1),T_0(x_2), T_0(x_3) )-T_1(f(x_1, x_2, x_3))=0.\nonumber
\end{eqnarray}
 By (\ref{pre-lie 2-algebra relation}) and (\ref{RB pre-lie 2-algebra relation}),  we have $d^3(f, \theta)=0$. Thus,  $(f, \theta)$ is a 3-cocycle of  Rota-Baxter pre-Lie 2-algebras of weight $\lambda$.

The proof of other direction is similar, and  the proof is  finished.
\end{proof}

\subsection{Crossed modules}\

Now we turn to the study on strict Rota-Baxter pre-Lie 2-algebras of weight $\lambda$. First we introduce the notion of crossed
modules of  Rota-Baxter pre-Lie 2-algebras of weight $\lambda$.

\begin{defn}
A crossed module of Rota-Baxter pre-Lie 2-algebras of weight $\lambda$ is a quintuple $((\mathfrak{g}_0, \cdot_0), (\mathfrak{g}_1, \cdot_1), d, (S, P), (T_0, T_1, T_2=0))$,
where $(\mathfrak{g}_0, \cdot_0, T_0)$ is a  Rota-Baxter pre-Lie algebra of weight $\lambda$ and $(\mathfrak{g}_1, \cdot_1)$ is a  pre-Lie algebra, $d: \mathfrak{g}_1\rightarrow \mathfrak{g}_0$ is a homomorphism of pre-Lie algebras,   and $(\mathfrak{g}_1, S, P, T_1)$ is a Rota-Baxter bimodule  over $(\mathfrak{g}_0, \cdot_0, T_0)$ such that for all $x \in \mathfrak{g}_0$ and $\alpha,~~\beta \in \mathfrak{g}_1$, the following
equalities are satisfied:
\begin{eqnarray*}
(C1) &&d(S(x)\alpha) = x \cdot_0 d\alpha,~~ d(P(x)\alpha) = (d\alpha) \cdot_0 x,~~d\circ T_1=T_0\circ d; \\
(C2) && S(d\alpha)\beta = P(d\beta)\alpha = \alpha \cdot_1\beta.
\end{eqnarray*}

\end{defn}
Let $(\mathcal{G}, \Theta)$ be a strict Rota-Baxter pre-Lie 2-algebra of weight $\lambda$. Then $\mathcal{G}$ is a  strict pre-Lie 2-algebra. Therefore, we have
\begin{eqnarray}
&&~dl_2(x, \alpha)=l_2(x, d(\alpha)),\\
&&~dl_2(\alpha, x)=l_2(d(\alpha), x),\label{pre-lie algebra 1}\\
&&l_2(d(\alpha), \beta)=l_2(\alpha, d(\beta)),\\
&&~0=l_2(x, l_2(y, z))-l_2(l_2(x, y), z)-l_2(y, l_2(x, z))+l_2(l_2(y, x), z),\\
&&~0=l_2(x, l_2(y, \alpha))-l_2(l_2(x, y), \alpha)-l_2(y, l_2(x, \alpha))+l_2(l_2(y, x), \alpha),\label{pre-lie algebra 2}\\
&& ~0=l_2(\alpha, l_2(x, y))-l_2(l_2(\alpha, x), y)-l_2(x, l_2(\alpha, y))+l_2(l_2(x, \alpha), y),\label{RB bimodule 1}
\end{eqnarray}
for all $x,~~ y\in \mathfrak{g}_0$ and $\alpha\in \mathfrak{g}_1$.
\begin{thm}
There is a one-to-one correspondence between strict Rota-Baxter pre-Lie 2-algebras of weight $\lambda$ and crossed
modules of Rota-Baxter pre-Lie 2-algebras of weight $\lambda$.
\end{thm}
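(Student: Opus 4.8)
The plan is to establish the correspondence by constructing explicit maps in both directions and checking they are mutually inverse. Starting from a strict Rota-Baxter pre-Lie 2-algebra $(\mathcal{G}, \Theta)$ with $\mathcal{G} = (\mathfrak{g}_0, \mathfrak{g}_1, d, l_2, l_3=0)$ and $\Theta = (T_0, T_1, T_2=0)$, I would produce a crossed module as follows. First I would define the products: set $x \cdot_0 y = l_2(x,y)$ for $x,y\in\mathfrak{g}_0$, and define the left and right actions of $\mathfrak{g}_0$ on $\mathfrak{g}_1$ by $S(x)\alpha = l_2(x,\alpha)$ and $P(x)\alpha = l_2(\alpha,x)$. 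I would then define a pre-Lie product on $\mathfrak{g}_1$ by $\alpha \cdot_1 \beta := l_2(d(\alpha),\beta)$, noting that by relation~\eqref{RB bimodule 1}-type identities (the strict versions of conditions $(c)$) this agrees with $l_2(\alpha, d(\beta))$, which is precisely the content needed for $(C2)$.

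The key verifications split cleanly according to which axiom of Definition~\ref{Def: RBpreLie 2-algebra} (in the strict case) they come from. I would check that $(C1)$'s first two equations are exactly the strict conditions $(a)$ and $(b)$, i.e.\ $dl_2(x,\alpha)=l_2(x,d\alpha)$ and $dl_2(\alpha,x)=l_2(d\alpha,x)$, while $d\circ T_1 = T_0\circ d$ is condition $(i)$. For the Rota-Baxter bimodule structure $(\mathfrak{g}_1, S, P, T_1)$ over $(\mathfrak{g}_0,\cdot_0,T_0)$, the pre-Lie bimodule compatibilities follow from the strict versions of the pre-Lie 2-algebra relations such as~\eqref{pre-lie algebra 2} and~\eqref{RB bimodule 1}, and the two Rota-Baxter bimodule equations of Definition~\ref{Def: Rota-Baxter bimodules} follow precisely from conditions $(iii)$ and $(iv)$ once $T_2=0$ is imposed (so the right-hand sides $T_2(d\alpha,x)$ and $T_2(x,d\alpha)$ vanish). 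That $(\mathfrak{g}_0,\cdot_0,T_0)$ is itself a Rota-Baxter pre-Lie algebra comes from condition $(ii)$ with $T_2=0$ (forcing $l_2(T_0 x,T_0 y)=T_0(l_2(T_0 x,y)+l_2(x,T_0 y)+\lambda l_2(x,y))$) together with the associativity relation, and that $d$ is a pre-Lie homomorphism uses $(a)$, $(b)$ and $(c)$.

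For the reverse direction, given a crossed module, I would reconstruct $\mathcal{G}$ by setting $l_2$ on $\mathfrak{g}_0$ from $\cdot_0$, on the mixed slots from $S$ and $P$, and would take $l_3=0$ and $T_2=0$, then verify the strict axioms of Definitions~\ref{Def: preLie 2-algebra} and~\ref{Def: RBpreLie 2-algebra}. Here each crossed-module axiom maps back to the corresponding strict 2-algebra relation by the same dictionary; the one place demanding care is recovering condition $(v)$, which in the strict case collapses to the statement that a certain alternating sum of $\theta=T_2=0$ and $l_3=0$ terms vanishes identically, so $(v)$ holds vacuously.

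The main obstacle I anticipate is bookkeeping rather than conceptual: one must confirm that the bimodule axiom $(C2)$, which asserts $S(d\alpha)\beta = P(d\beta)\alpha = \alpha\cdot_1\beta$, is both well-defined and consistent, i.e.\ that the pre-Lie product $\cdot_1$ built from it is genuinely associative-symmetric in the pre-Lie sense, and that this is forced by (and forces) the strict relation~\eqref{RB bimodule 1}. Checking that $\cdot_1$ satisfies the pre-Lie identity requires combining condition $(c)$ (giving $l_2(d\alpha,\beta)=l_2(\alpha,d\beta)$) with the module relations and the homomorphism property of $d$; verifying that this circle of identities is exactly equivalent, with no leftover conditions, is the delicate step. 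Everything else is a direct translation, so I would organize the proof as a table of correspondences between the defining equations of the two structures and simply confirm each line.
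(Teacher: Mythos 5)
Your proposal matches the paper's proof: both directions are constructed by the same dictionary ($\cdot_0,S,P$ from $l_2$, the product $\alpha\cdot_1\beta=l_2(d\alpha,\beta)=l_2(\alpha,d\beta)$ on $\mathfrak{g}_1$ giving (C2), conditions (ii)--(iv) with $T_2=0$ yielding the Rota-Baxter algebra and bimodule axioms, and (v) holding vacuously), and the paper likewise leaves the remaining checks as direct verification. Your accounting of which 2-algebra axiom corresponds to which crossed-module axiom is consistent with the paper's.
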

\begin{proof}
Let $(\mathcal{G}, \Theta)$ be a strict Rota-Baxter pre-Lie 2-algebra of weight $\lambda$. We construct a crossed module of
strict Rota-Baxter pre-Lie 2-algebra of weight $\lambda$ as follows. Obviously, $(\mathfrak{g}_0, \cdot_0, T_0)$ is a  Rota-Baxter pre-Lie algebra of weight $\lambda$. Define a multiplication $\cdot_1$ on $\mathfrak{g}_1$ by
\begin{eqnarray*}\label{RB bimodule 2}
\alpha\cdot_1 \beta = l_2(d\alpha,  \beta) =l_2( \alpha,  d\beta).
\end{eqnarray*}
By conditions (\ref{pre-lie algebra 1}) and (\ref{pre-lie algebra 2}). Then    $(\mathfrak{g}_1, \cdot_1)$ is a pre-Lie algebra.  Obviously, we deduce that $d$ is a
homomorphism between pre-Lie algebras. Define $S,~ P: \mathfrak{g}_0 \rightarrow gl(\mathfrak{g}_1)$ by
\begin{eqnarray}
S(x)\alpha = l_2(x, \alpha),~~ P(x)\alpha = l_2(x, \alpha).
\end{eqnarray}
By conditions (ii),  (iii) in Definition \ref{Def: RBpreLie 2-algebra} and (\ref{RB bimodule 1}), (\ref{RB bimodule 2}).  Then   $(\mathfrak{g}_1, S, P, T_1)$ is a Rota-Baxter bimodule  over $(\mathfrak{g}_0, \cdot_0, T_0)$.
Thus $((\mathfrak{g}_0, \cdot_0), (\mathfrak{g}_1, \cdot_1), d, (S, P), T_0, T_1, T_2=0)$ is a crossed module of Rota-Baxter pre-Lie 2-algebras of weight $\lambda$.

Conversely, a crossed module of Rota-Baxter pre-Lie 2-algebras of weight $\lambda$ $((\mathfrak{g}_0, \cdot_0), (\mathfrak{g}_1, \cdot_1), d,$  \\
$(S, P), T_0, T_1, T_2=0)$ gives rise to a strict Rota-Baxter pre-Lie 2-algebra of weight $\lambda$ $(\mathfrak{g}_0,\mathfrak{g}_1, d, l_2, l_3 = 0, T_0, T_1, T_2=0)$, where $l_2 : \mathfrak{g}_i \otimes \mathfrak{g}_j \rightarrow \mathfrak{g}_{i+j} ,~~ 0 \leq i + j \leq 1$ is given by
\begin{eqnarray*}
l_2(x, y) = x \cdot_0 y,~~ l_2(x, \alpha)= S(x)\alpha,~~ l_2(\alpha, x) = P(x)\alpha.
\end{eqnarray*}
Direct verification shows that $(\mathfrak{g}_0, \mathfrak{g}_1, d, l_2, l_3=0, T_0, T_1, T_2=0)$  is a strict Rota-Baxter pre-Lie 2-algebra of weight $\lambda$.
\end{proof}

\bigskip

 \section{From Rota-Baxter associative algebras to Rota-Baxter pre-Lie algebras}

We  recall the   cohomology of  Rota-Baxter associative algebras with any weight  which was first defined by  W. Kai and G. Zhou \cite{WZ21}. We first recall the definition of Rota-Baxter associative algebras and Rota-Baxter	bimodules.
\begin{defn}\cite{WZ21}
	\begin{itemize}
		\item [(i)]Given  an algebra  $A$, it is called  a Rota-Baxter associative algebra  of weight $\lambda$, if there is a linear
		operator $T: \frakg \rightarrow \frakg $ subject to
		\begin{eqnarray}\label{Eq: Rota-Baxter relation}
			T(a)\pl  T(b)=T\big(a\pl  T(b)+T(a)\pl  b+\lambda\  a\pl
			b\big)\end{eqnarray}	
		for arbitrary $a,~b \in \frakg $;
		\item [(ii)]Let $A$ be a Rota-Baxter associative algebra  of weight $\lambda$ and $(M, \cdot)$ be a bimodule over
		algebra $A$. We say that $(M, \cdot, T_M)$ is a bimodule over Rota-Baxter associative algebra $(A, \mu, T)$  of weight $\lambda$ or a Rota-Baxter	bimodule if $(M, \cdot)$ is endowed with a linear operator $T_M : M \rightarrow M$ such that the following equations
		\begin{eqnarray}
			&&T(a)\cdot T_M(m) = T_M(a\cdot T_M(m) + T(a)\cdot m + \lambda a\cdot m),\\
			&&T_M(m)\cdot T(a) = T_M(m\cdot T(a) + T_M(m)\cdot a + \lambda m\cdot a)
		\end{eqnarray}
		hold for arbitrary  $a \in \frakg$ and $m \in M$.
	\end{itemize}
\end{defn}
For a Rota-Baxter associative algebra  $(A,\ T)$ of weight $\lambda$    and a Rota-Baxter	bimodule $(M,\  \cdot,\  T_M)$, we can define a new  bimodule structure
$\left( (A,\ \star),( M,\ \rhd ,\ \lhd) \right) $ as follow: for arbitrary $a,b\in A ,m\in M$,
\begin{eqnarray}
	a\star  b:&=&a\pl  T(b)+T(a)\pl  b+\lambda a\pl  b,\\
	a\rhd m:&=& T(a)\pl m-T_M(a \pl m),\\
	m\lhd a:&=& m\pl T(a)-T_M(m \pl a).
\end{eqnarray}

 The cochain complex of associative version Rota-Baxter operator $T$  over a   Rota-Baxter  bimoldule $(M,\ T_M)$, denoted by $\C^\bullet_{\RBO_{\lambda}}(A , M)=\C_{\Alg}^\bullet(A _\star , {_\rhd
 	M_\lhd})=\bigoplus\limits_{n=0}^\infty \C^n_{\Alg}(A _\star , {_\rhd
	M_\lhd})$. For $n\geq 0$, $ \C^n_{\Alg}(A _\star , {_\rhd
	M_\lhd})=\Hom(A^{\otimes n}, M)$ and its differential $$\partial^n:
\C^n_{\mathrm{Alg}}(A _\star ,\  _\rhd M_\lhd)\rightarrow  \C^{n+1}_{\mathrm{Alg}}(A _\star , {_\rhd M_\lhd}) $$ is defined as:

\begin{align*}&\partial^n(f)(a_{1, n+1}) \\
=& (-1)^{n+1}\left(T(a_1)f(a_{2,n+1})-T_M\left( a_1f(a_{2,n+1})\right)  \right) \\
&\sum^{n}_{i=1}(-1)^{n-i+1}\left(  f\left(a_{1,i-1},a_iT(a_{i+1}),a_{i+2,n+1} \right) +f\left( a_{1,i-1},T(a_{i})a_{i+1},a_{i+2,n+1}\right)    +\lambda f\left( a_{1,i-1},a_ia_{i+1},a_{i+2,n+1}\right) \right)\\
&    +  \left(  f\left( a_{1,n}T(a_{n+1})\right) -T_M(f(a_{1,n})a_{n+1})\right)   \\
\end{align*}
for arbitrary $f\in  \C^n_{\Alg}(A_\star ,\  _\rhd M_\lhd)$ and $a_1,\dots,a_{n+1}\in A$.

\begin{prop}\cite{WZ21}
	We can define a chain map $\Phi_{\Alg}^{\bullet}:\C_{\Alg}^\bullet(A ,M)\rightarrow \C^\bullet_{\PLA}(A ,M)$ as follow:
	For  $n\geqslant 1$ and $ f\in \C^n_{\Alg}(A,M)$,  define $\Phi^n(f)\in \C^n_{\RBO_{\lambda}}(A,M)$ as:
$$
		\Phi^n(f) =f\circ (T^{\otimes n})-\sum_{k=1}^{n-1}\lambda^{n-k-1}\sum_{1\leqslant i_1<i_2<\cdots< i_{k}\leqslant n}  T_M\circ f\circ (\Id^{\otimes (i_1-1)} \otimes  T \otimes \Id^{\otimes (i_2-i_1-1)} \otimes T\otimes \cdots\otimes T\otimes \Id^{\otimes (n-1-i_{k})} ) .
$$
\end{prop}
 We define the  cochain complex $(\C^\bullet_{\mathsf{RBA}_{\lambda}}(A,M), d^\bullet)$  of Rota-Baxter pre-Lie algebra $(A,\mu,T)$ with coefficients in $(M,\cdot, T_M)$ to the negative shift of the mapping cone of $\Phi^\bullet$

Let $A$ be a linear space, there exists an inclusion $\Psi^n$ from $A^{\wedge n}$ to $A^{\otimes n}$, $\Psi^n(a_1\wedge\cdots\wedge a_n)=\sum(-1)^{\sigma}a_{\sigma(1)}\otimes\cdots\otimes a_{\sigma(n)}$, for $n\geq 1$ and $a_1,\ldots,a_n\in A$, which can induce a chain map $(\Psi^{*})^{\bullet}:\C_{\Alg}^\bullet(A ,M)\rightarrow \C^\bullet_{\PLA}(A ,M)  $. Thus, we have the following propositon.

\begin{prop}
	Let $A$ and $M$ be two linear spaces. If there exist a Rota-Baxter associative algebra structure $(A,\ T_{\Alg},\ \mu_{\Alg})$ and a Rota-Baxter pre-Lie algebra structure $(A,\ T_{\PLA},\ \mu_{\PLA})$ of weight $\lambda$ on $A$, and a Rota-Baxter	bimodule over $(A,\ T_{\Alg},\ \mu_{\Alg})$ and $(A,\ T_{\PLA},\ \mu_{\PLA})$. Then, we can induce a chain map $(\tilde{\Psi}^{*})^{\bullet}:\C^\bullet_{\mathsf{RBA}_{\lambda}}(A,M)\rightarrow\C^\bullet_{\RBA}(A,M)$.
\end{prop}

\bigskip

\noindent
{{\bf Acknowledgments.} This work is supported in part by Natural Science Foundation of China (Grant Nos. 12071137, 12161013, 11971460).

%\section*{Appendix B: Proof of Theorem~\ref{Thm: rb-L-infty} }
	
%\bibliographystyle{amsplain}

\end{document}